        \theoremstyle{plain}
        \newtheorem{theorem}{Theorem}[section]
        \newtheorem{corollary}[theorem]{Corollary}
        \newtheorem{lemma}[theorem]{Lemma}
        \newtheorem{proposition}[theorem]{Proposition}
        \theoremstyle{definition}
        \theoremstyle{remark}
        \newtheorem{remark}[theorem]{Remark}
        \newcommand{\suchthat}{\,:\,}
        \newcommand{\itemref}[1]{\eqref{#1}}
        \newcommand{\mathscript}{\mathcal}
        \newcommand{\Z}{\mathbb{Z}}
        \newcommand{\Q}{\mathbb{Q}}
        \newcommand{\Orb}{\mathcal{O}}   
       \DeclareMathOperator{\spec}{Spec} 
        \newcommand{\red}[1]{{#1}_{\mathrm{red}}} 
           \newcommand{\lisset}{\mathrm{lis\text{\nobreakdash-}\acute{e}t}}
        \newcommand{\MOD}{\mathsf{Mod}}    
         \DeclareMathOperator{\Hom}{Hom}
        \DeclareMathOperator{\Ext}{Ext}
        \newcommand{\trunc}[1]{\tau^{{#1}}}
        \newcommand{\RDERF}{\mathsf{R}}
        \newcommand{\LDERF}{\mathsf{L}}
        \newcommand{\DCAT}{\mathsf{D}}
        \newcommand{\RHom}{\RDERF\!\Hom}
        \newcommand{\QCOH}{\mathsf{QCoh}}
        \newcommand{\COH}{\mathsf{Coh}}
        \renewcommand{\bar}[1]{\overline{{#1}}}
        \newcommand{\tensor}{\otimes}
        \newcommand{\homotopic}{\simeq}
\newcommand{\fndefn}[1]{\emph{{#1}}}
\renewcommand{\subset}{\subseteq}
\numberwithin{equation}{section}
\newcommand{\qcsubscript}{\mathrm{qc}} 
\newcommand{\DQCOH}[1][]{\DCAT_{\qcsubscript{#1}}} 
\newcommand{\QCOHPSH}[1]{(#1_{\QCOH})_*} 
\newcommand{\MODPSH}[1]{(#1_{\lisset})_*} 
\newcommand{\Galpha}{\pmb{\alpha}} 
\newcommand{\GL}{\mathrm{GL}} 
\newcommand{\Ga}{\mathbb{G}_a} 
\newcommand{\KCAT}{\mathsf{K}}
\newcommand{\holim}[1]{\underset{#1}{\mathrm{holim}}\,}
\newcommand{\spref}[1]{\href{http://stacks.math.columbia.edu/tag/#1}{#1}}
\newcommand{\labitem}[2]{%
\def\@itemlabel{(\textbf{#1})}
\item
\def\@currentlabel{\textbf{#1}}\label{#2}}
\newcommand\nc {\newcommand}
\theoremstyle{definition}
\newcommand{\zz}{{\mathbb Z}}
\nc\script{\mathscript}
\nc\z{\zeta}
\nc\bc{{\mathbb{BC}}}
\nc\ct{{\script T}}
\nc\cf{{\script F}}
\nc\cl{{\script L}}
\nc\cv{{\script V}}
\nc\ce{{\script E}}
\nc\ch{{\script H}}
\nc\cs{{\script S}}
\nc\car{{\script R}}
\nc\cd{{\script D}}
\nc\cc{{\script C}}
\nc\ck{{\script K}}
\nc\ca{{\script A}}
\nc\ci{{\script I}}
\nc\cj{{\script J}}
\nc\co{{\script O}}
\nc\cm{{\script M}}
\nc\colim{{\ds\mathop{\rm colim}_{\ds\la}}}
\nc\oa{\overline{\ca}}
\nc\s{\sigma}
\nc\ta{\tau}
\nc\os{\overline\sigma}
\nc\ot{\overline\tau}
\nc\T{\Sigma}
\nc\Tm{\Sigma^{-1}}
\nc\cb{{\script B}}
\nc\ab{{\script A}b}
\nc\id{\text{\rm id}}
\nc\Loc{\mathrm{Loc}}
\newcommand{\ls}{\mathrm{ls}}
\newcommand{\badgroup}{poor}
\newcommand{\badstack}{{\badgroup}ly stabilized}
\newcommand*\cocolon{%
        \nobreak
        \mskip6mu plus1mu
        \mathpunct{}%
        \nonscript
        \mkern-\thinmuskip
        {:}%
        \mskip2mu
        \relax
}
\newcommand{\REP}{\mathsf{Rep}}
\let\colim\relax
\DeclareMathOperator{\colim}{colim}
\begin{document}
\author[J. Hall]{Jack Hall}
\address{Mathematical Sciences Institute\\
        The Australian National University\\
        Acton, ACT, 2601\\
        Australia}
\email{jack.hall@anu.edu.au}
\author[A. Neeman]{Amnon Neeman}
\address{Mathematical Sciences Institute\\
        The Australian National University\\
        Acton, ACT, 2601\\
        Australia}
\email{Amnon.Neeman@anu.edu.au}
\author[D. Rydh]{David Rydh}
\address{KTH Royal Institute of Technology\\
         Department of Mathematics\\
         SE\nobreakdash-100\ 44\ Stockholm\\
         Sweden}
\email{dary@math.kth.se}
\thanks{The third author is supported by the Swedish Research Council (VR), grant number 2011-5599.}

\title[Derived categories of algebraic stacks]{One positive and two negative results for derived categories of algebraic stacks}
\date{December 3, 2015}
\begin{abstract}
  Let $X$ be a quasi-compact and quasi-separated scheme. There are two fundamental and pervasive facts about the unbounded derived category of $X$: (1) $\mathsf{D}_{\mathrm{qc}}(X)$ is compactly generated by perfect complexes and (2) if $X$ is noetherian or has affine diagonal, then the functor $\Psi_X \colon \mathsf{D}(\mathsf{QCoh}(X)) \to \mathsf{D}_{\mathrm{qc}}(X)$ is an equivalence. Our main results are that for algebraic stacks in positive characteristic, the assertions (1) and (2) are typically false.

\end{abstract}

\subjclass[2010]{Primary 14F05; secondary 13D09, 14A20, 18G10}

\keywords{Derived categories, algebraic stacks}

\maketitle
\numberwithin{equation}{section}
\section{Introduction}\label{S:Introduction}
Fix a field $k$ and an algebraic group $G$ over $k$. Ben-Zvi posed the
following question~\cite{165701}: if $k$ has positive characteristic, then is
the unbounded derived category of representations of $G$ compactly generated?

The second author recently answered Ben-Zvi's question negatively in
the case of $\Ga$~\cite[Rem.~4.2]{MR3212862}. We establish a much stronger version
of this result: in the unbounded derived category
of representations of $\Ga$ in positive characteristic, there are no compact
objects besides $0$ (Proposition \ref{P:BGa:nocompacts}).

We say that $G$ is \emph{\badgroup} if $k$ has positive characteristic and
$\overline{G}=G\tensor_k \overline{k}$ has a subgroup isomorphic to $\Ga$, or,
equivalently, if $\red{\bar{G}}^0$ is not semi-abelian (Lemma \ref{L:badgroup}). Examples of {\badgroup}
groups are $\Ga$ and $\GL_n$. The results of this article imply that in
positive characteristic, the derived category of representations of $G$ is not
compactly generated if $G$ is {\badgroup}. Conversely, when $G$ is not
{\badgroup}, the first and third author show that its derived category of representations is compactly generated \cite[Thm.~A]{hallj_dary_alg_groups_classifying}.  Ben-Zvi's question is thus completely resolved.

A somewhat subtle point that we have suppressed so far is that there are two
potential ways to look at the unbounded derived category of representations
of~$G$. First, there is $\DCAT(\REP(G))$; second, there is $\DQCOH(BG)$, the
unbounded derived category of lisse-\'etale $\Orb_{BG}$-modules with
quasi-coherent cohomology. There is a natural functor $\DCAT(\REP(G)) \to
\DQCOH(BG)$ and if $G$ is affine, then this functor induces an equivalence on
bounded below derived categories.


In the present article, we will show that in positive characteristic if $G$ is
affine and {\badgroup}, then this
functor is not full. We also prove that if $G$ is {\badgroup},
then neither $\DCAT(\REP(G))$ nor $\DQCOH(BG)$ is compactly generated.

The results above are actually special cases of some general results for
unbounded derived categories of quasi-coherent sheaves on algebraic stacks. We
say that an algebraic stack is \emph{\badstack} (see \S\ref{S:gen_case}) if it
has a point with {\badgroup} stabilizer group.
Our first main result is the following.
\begin{theorem}\label{T:compact-generation}
  Let $X$ be an algebraic stack that is quasi-compact, 
  quasi-separated and \badstack.
  \begin{enumerate}
  \item The triangulated category $\DQCOH(X)$ is not compactly
    generated.
  \item If $X$ has affine diagonal or is noetherian and affine-pointed, then
    $\DCAT(\QCOH(X))$ is not compactly generated. 
  \end{enumerate}
\end{theorem}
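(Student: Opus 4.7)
My plan for (1) is to suppose for contradiction that $\DQCOH(X)$ admits a set of compact generators, and then produce a nonzero object lying in the right orthogonal of all compact objects. First, I would exploit the poor-stabilizer hypothesis to select a point $x \in |X|$ whose geometric stabilizer contains $\Ga$. Since $X$ is quasi-compact and quasi-separated, the residual gerbe at $x$ is well behaved after base change to $\overline{k(x)}$, and composing with the morphism of classifying stacks induced by the closed subgroup inclusion $\Ga \hookrightarrow \overline{G_x}$ produces a probe morphism $\iota\colon B\Ga \to X$ defined over a field of positive characteristic.

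The crux is a compactness-transport claim: for every compact $C \in \DQCOH(X)$, the pullback $\iota^* C$ is compact in $\DQCOH(B\Ga)$. Granting this, the $\DQCOH(B\Ga)$-analogue of Proposition \ref{P:BGa:nocompacts} (no nonzero compact objects in positive characteristic) forces $\iota^* C = 0$ for every compact $C$. Then for any nonzero $F \in \DQCOH(B\Ga)$---for instance $F = \Orb_{B\Ga}$---the pushforward $\iota_* F$ is nonzero in $\DQCOH(X)$ and satisfies $\Hom(C, \iota_* F) = \Hom(\iota^* C, F) = 0$ for every compact $C$, contradicting compact generation.

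The main obstacle is justifying the compactness-transport claim, since $\iota$ is neither open nor a closed immersion and pullback along a general morphism of stacks does not preserve compacts. The path I would follow is to show that $\iota_*$ commutes with small coproducts, as this transports compactness through the adjunction $\iota^* \dashv \iota_*$. After base change along a smooth atlas of $X$, the morphism $\iota$ should reduce to a representable affine morphism whose pushforward is cohomologically trivial and commutes with coproducts; combined with quasi-compactness to bound cohomological dimension globally, this should suffice. A secondary obstacle is upgrading Proposition \ref{P:BGa:nocompacts} from $\DCAT(\REP(\Ga))$ to $\DQCOH(B\Ga)$, which should follow by tracking compact objects under the canonical comparison functor and its equivalence on the bounded-below part.

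For (2), the comparison functor $\Psi_X\colon \DCAT(\QCOH(X)) \to \DQCOH(X)$ is fully faithful under either of the listed hypotheses, and the obstructing object $\iota_* F$ constructed in (1) can be arranged to lie in its image (using a Cartan--Eilenberg-style resolution in $\QCOH(X)$). A compact generating set in $\DCAT(\QCOH(X))$ would therefore produce, via $\Psi_X$, the same kind of contradiction as in (1), reducing the second statement to the first.
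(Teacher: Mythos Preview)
Your argument for (1) is essentially the paper's proof: produce a representable morphism $\phi\colon B_k\Ga\to X$ from the poorly-stabilized hypothesis (Lemma~\ref{L:badgroup}), observe that $\RDERF\MODPSH{\phi}$ preserves coproducts because $\phi$ is representable (hence concentrated), so $\LDERF\phi^*$ preserves compacts, and then Proposition~\ref{P:BGa:nocompacts} kills $\LDERF\phi^*C$ for every compact $C$, leaving $\RDERF\MODPSH{\phi}\Orb_{B_k\Ga}$ right-orthogonal to all compacts. Note also that Proposition~\ref{P:BGa:nocompacts} as stated already covers $\DQCOH(B_k\Ga)$, so your ``secondary obstacle'' is not one.

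Your argument for (2), however, has a genuine gap. You assert that $\Psi_X\colon \DCAT(\QCOH(X))\to\DQCOH(X)$ is fully faithful under the hypotheses of (2), and then transport the obstruction through it. But this is precisely what is \emph{false}: Theorem~\ref{T:notfull} shows that for a poorly stabilized $X$ with affine diagonal (or noetherian and affine-pointed), $\Psi_X$ is not full. Indeed, Remark~\ref{R:lc_ff} identifies full faithfulness of $\Psi_X$ with left-completeness of $\DCAT(\QCOH(X))$, and the latter fails here. Only the bounded-below restriction $\Psi_X^+$ is an equivalence (Theorem~\ref{T:bdd_below}), which is not enough to move an unbounded obstructing object or an arbitrary compact across.

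The paper instead argues (2) intrinsically in $\DCAT(\QCOH(X))$: the functor $\RDERF\QCOHPSH{\phi}$ preserves products (since $\QCOHPSH{\phi}$ does) and, by well-generation, therefore has a left adjoint $F\colon\DCAT(\QCOH(X))\to\DCAT(\QCOH(B_k\Ga))$. Because $\RDERF\QCOHPSH{\phi}$ also preserves coproducts (Corollary~\ref{C:dsum}\itemref{CI:dsum:coprod}), $F$ preserves compacts, and Proposition~\ref{P:BGa:nocompacts} again forces $F(Q)\simeq 0$ for every compact $Q$, contradicting the existence of a nonzero map $Q\to\RDERF\QCOHPSH{\phi}\Orb_{B_k\Ga}$.
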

An algebraic stack $X$ is \fndefn{affine-pointed} if every morphism $\spec k \to X$, where $k$ is a 
field, is affine. If $X$ has quasi-affine or quasi-finite diagonal, then $X$ is affine-pointed \cite[Lem.~4.5]{hallj_dary_coherent_tannakian_duality}.

We wish to point out that Theorem \ref{T:compact-generation} is counter to the prevailing wisdom. Indeed, let $X$ be a quasi-compact and quasi-separated algebraic stack. If $X$ is
a scheme, then it is well-known that $\DQCOH(X)$ is compactly generated by perfect complexes
\cite[Thm.~3.1.1(b)]{MR1996800}. More generally, recent work of Krishna \cite[Lem.~4.8]{MR2570954},
Ben-Zvi--Francis--Nadler \cite[\S3.3]{MR2669705}, To\"en \cite[Cor.~5.2]{MR2957304},
and the first and third authors \cite{perfect_complexes_stacks}, has
shown
that the unbounded derived category $\DQCOH(X)$ is
compactly generated by perfect complexes if $X$ is a Deligne--Mumford stack or a stack in characteristic zero that \'etale-locally has the resolution property \cite{MR2108211,2013arXiv1306.5418G}.

Also recall that if $X$ is a scheme that is either
quasi-compact with affine diagonal or noetherian, then the functor
$\Psi_X\colon \DCAT(\QCOH(X)) \to \DQCOH(X)$ is an equivalence of
triangulated categories---see \cite[Cor.~5.5]{MR1214458} for the
separated case (the argument adapts trivially to the case of affine
diagonal) and
\cite[Tags \spref{08H1} \& \spref{09TN}]{stacks-project}
in the setting of algebraic spaces. Our second main result is a partial extension of this to algebraic stacks.
\begin{theorem}\label{T:cpt-gen_Psi-equiv}
Let $X$ be an algebraic stack that is quasi-compact with affine
diagonal or noetherian and affine-pointed. If $\DQCOH(X)$ is compactly generated, then the
functor $\Psi_X\colon \DCAT(\QCOH(X))\to \DQCOH(X)$ is an
equivalence of categories.
\end{theorem}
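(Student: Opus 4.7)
The plan is to use the compact-generation hypothesis to bridge the known equivalence of $\Psi_X$ on bounded-below derived categories to a full equivalence on the unbounded ones. Because $\QCOH(X)$ is a Grothendieck abelian category, $\DCAT(\QCOH(X))$ is well-generated, so the coproduct-preserving triangulated functor $\Psi_X$ admits a right adjoint $\Phi_X$ by Brown representability. I would show both the unit $\eta\colon \id \to \Phi_X\Psi_X$ and counit $\epsilon\colon \Psi_X\Phi_X \to \id$ of this adjunction are natural isomorphisms.

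Two standard ingredients come first. The classical equivalence $\Psi_X^+\colon \DCAT^+(\QCOH(X)) \simeq \DQCOH^+(X)$ on bounded-below derived categories holds under our hypotheses via K-injective resolutions. Moreover, under the hypotheses on $X$, the compact objects of $\DQCOH(X)$ can be taken to be bounded (indeed perfect, in the sense of the work on perfect complexes on stacks cited in the introduction). Fix a set $\{S_i\}$ of compact generators of $\DQCOH(X)$; by boundedness each $S_i$ lifts uniquely via $\Psi_X^+$ to some $\tilde S_i \in \DCAT^b(\QCOH(X))$ with $\Psi_X\tilde S_i \cong S_i$. Essential surjectivity of $\Psi_X$ then follows formally: the essential image is a localizing subcategory of $\DQCOH(X)$ containing the generating set, hence all of $\DQCOH(X)$.

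Full faithfulness---the unit $\eta_A$ being an isomorphism for every $A$---reduces to checking that for each $i$ the natural map
\begin{equation*}
\Hom_{\DCAT(\QCOH(X))}(\tilde S_i, A) \longrightarrow \Hom_{\DQCOH(X)}(S_i, \Psi_X A)
\end{equation*}
induced by $\Psi_X$ is an isomorphism for every $A \in \DCAT(\QCOH(X))$. For $A \in \DCAT^+(\QCOH(X))$ this is exactly the bounded-below equivalence; the content is therefore the unbounded case. The natural approach is to filter $A$ by truncations $\tau^{\geq -n}A$, apply the bounded-below equivalence at each stage, and pass to the (co)limit using compactness of $\tilde S_i$ in $\DCAT(\QCOH(X))$ on the source side and of $S_i$ in $\DQCOH(X)$ on the target side.

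The main obstacle, and where the structural hypothesis on $X$ genuinely enters, is verifying that the lift $\tilde S_i$ is itself compact in $\DCAT(\QCOH(X))$---not merely that $\Psi_X \tilde S_i$ is compact in $\DQCOH(X)$. This is the technical heart of the argument, and is expected to require a smooth affine atlas together with a simplicial/descent computation controlling the cohomology of arbitrary coproducts in $\QCOH(X)$ against a bounded complex. The affine-diagonal or noetherian-affine-pointed hypothesis is what makes this descent bound tractable: it ensures the iterated fibre products appearing in the \v{C}ech/bar construction associated to the atlas are sufficiently tame (affine, or at worst a finite extension of an affine scheme) so that the relevant spectral sequence collapses at the level needed. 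Once compactness of $\tilde S_i$ in $\DCAT(\QCOH(X))$ is established, the filtration argument closes, $\Psi_X$ is fully faithful, and combined with the essential surjectivity above this completes the proof.
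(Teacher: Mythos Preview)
Your strategy is close to the paper's, and you correctly identify the crux: showing that the lifts $\tilde S_i$ behave well in $\DCAT(\QCOH(X))$. But the mechanism you propose for this step, and the way you use it, both miss the mark.

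The paper does not prove compactness of $\tilde S_i$ via descent or a simplicial computation. Instead it uses a bounded-Ext criterion (Lemma~\ref{L:almost_char_compact}): since $S_i=\Psi_X(\tilde S_i)$ is compact in $\DQCOH(X)$, there is an $r$ with $\Hom_{\Orb_X}(S_i,N[j])=0$ for all $N\in\QCOH(X)$ and $j>r$; via the equivalence $\Psi_X^+$ this transfers to $\tilde S_i$, and a direct K-injective argument then yields the truncation isomorphism
\[
\tau^{\geq j}\RHom(\tilde S_i,M)\;\simeq\;\tau^{\geq j}\RHom(\tilde S_i,\tau^{\geq j-r}M)
\]
for every $M\in\DCAT(\QCOH(X))$. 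This simultaneously gives compactness of $\tilde S_i$ and the comparison $\RHom(\tilde S_i,M)\simeq\RHom(S_i,\Psi_X M)$.

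More seriously, your claim that ``once compactness of $\tilde S_i$ is established, the filtration argument closes'' does not hold as stated. Compactness controls $\Hom(\tilde S_i,-)$ against coproducts, but $\{\tau^{\geq -n}A\}_n$ is an \emph{inverse} system; passing to the limit on the source side would require $A\simeq\holim_n\tau^{\geq -n}A$ in $\DCAT(\QCOH(X))$, i.e., left-completeness---and Theorem~\ref{T:notfull} shows this can fail under precisely the present hypotheses. What you actually need is the bounded-Ext condition above, which gives the truncation isomorphism directly without any limit. With the Hom comparison in hand, conservativity of $\Psi_X$ (it is $t$-exact) shows the $\tilde S_i$ generate $\DCAT(\QCOH(X))$, so your reduction of full faithfulness to the $\tilde S_i$ is then justified. (Also note: your essential-surjectivity step---``the essential image is localizing''---is not valid before full faithfulness, since the essential image of a triangulated functor need not be triangulated; the paper instead shows $\Phi_X$ preserves coproducts and runs the unit/counit argument on both sides.)
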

In particular, $\Psi_X$ is an equivalence for every Deligne--Mumford
stack with affine diagonal, every noetherian Deligne--Mumford stack, and every stack in characteristic zero with affine diagonal that \'etale-locally has the resolution
property~\cite{perfect_complexes_stacks}. This is a vast extension of 
work of Krishna~\cite[Cor.~3.7]{MR2570954}, where $\Psi_X$ was proved to be an
equivalence for every Deligne--Mumford stack that is separated, of finite
type over a field of characteristic $0$, has the resolution property and
whose coarse moduli space is a scheme.

It is natural to ask whether
$\Psi_X$ is always an equivalence of categories. On the positive side, we
prove that the restricted functor $\Psi^+_X\colon \DCAT^+(\QCOH(X)) \to
\DQCOH^+(X)$ is an equivalence of triangulated categories when either $X$ is
quasi-compact with affine diagonal or noetherian and affine-pointed (Theorem
\ref{T:bdd_below}, also see \cite[Thm.~3.8]{lurie_tannaka} and
\cite[Prop.~II.3.5]{MR0354655}). On the negative side, we have the following
result.

\begin{theorem}\label{T:notfull}
  Let $X$ be an algebraic stack that is quasi-compact with affine
  diagonal or noetherian and affine-pointed. If $X$ is \badstack, then the functor
  $\Psi_X\colon \DCAT(\QCOH(X)) \to \DQCOH(X)$ is not full.
\end{theorem}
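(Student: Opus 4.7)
The plan is a contradiction argument. Suppose $\Psi_X$ is full. Because $\Psi_X$ is always faithful --- it induces the identity on quasi-coherent cohomology sheaves --- fullness upgrades it to a fully faithful functor. Combined with Theorem~\ref{T:bdd_below}, which identifies the bounded-below halves of the two categories, the essential image of $\Psi_X$ then contains all of $\DQCOH^+(X)$.

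The first substantive step is to promote fullness plus the bounded-below equivalence to an equivalence on the full unbounded categories. For any $\cplx{G}\in\DQCOH(X)$, the canonical Postnikov tower $\{\trunc{\geq -n}\cplx{G}\}_n$ has each truncation bounded below, hence in the essential image of $\Psi_X$, and $\cplx{G}$ is the homotopy limit of the tower in $\DQCOH(X)$. One must show that the essential image of the fully faithful $\Psi_X$ is closed under these homotopy limits. I expect this to be the main obstacle: it involves controlling how $\Psi_X$ interacts with unbounded Postnikov towers, which is precisely the setting in which $\Psi_X$ can fail to be an equivalence in general. The natural route is to combine that $\Psi_X$ preserves set-indexed products (they are computed term-wise in both categories on bounded-below pieces) with a Milnor-type $\lim^{1}$-exact sequence; if this goes through, $\Psi_X$ is essentially surjective and therefore an equivalence.

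Granted the equivalence, the contradiction should come from the bad-stabilization hypothesis. By the reductions behind Theorem~\ref{T:compact-generation}, one localizes at a bad point and passes to a residual substack isomorphic to a gerbe with band $\Ga$ in positive characteristic. By Proposition~\ref{P:BGa:nocompacts}, $\DQCOH(B\Ga)$ has no non-zero compact objects. Under the equivalence, any compact object in $\DCAT(\QCOH(B\Ga))=\DCAT(\REP(\Ga))$ would map to a compact object in $\DQCOH(B\Ga)$, and one exhibits such an object (for instance, a finite-dimensional $\Ga$-representation with a suitable truncated resolution against which $\Hom$ manifestly commutes with arbitrary direct sums) to obtain a contradiction. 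The transfer from the substack back to $X$ uses functoriality of $\Psi$ along the closed immersion of the bad locus.

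If the equivalence deduction turns out to be too delicate to carry through, the fallback is a direct construction of a non-liftable morphism. A natural candidate is to take $\cplx{G}=\prod_{n\geq 0}\Orb_X[n]$ and compare $\Hom_{\DCAT(\QCOH(X))}(\Orb_X,\cplx{G})$ with $\Hom_{\DQCOH(X)}(\Orb_X,\Psi_X\cplx{G})$. The latter acquires a genuine derived-limit contribution arising from the infinite cohomological dimension of $B\Ga$ in positive characteristic, whereas the former is the naive product of the Ext-groups $H^{\bullet}(X,\Orb_X)$; the discrepancy produces an explicit map in $\DQCOH(X)$ that cannot be in the image of $\Psi_X$. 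Either route reduces the theorem to controlling the same unbounded Ext/limit pathology that lies at the heart of the earlier main results.
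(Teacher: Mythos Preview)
Your argument has a fundamental gap at the very first step: the claim that $\Psi_X$ is ``always faithful'' because it induces the identity on cohomology sheaves is unjustified, and in fact the paper explicitly states (just after Theorem~\ref{T:notfull}) that the authors were unable to determine whether $\Psi_X$ is faithful in the poorly stabilized case. Inducing isomorphisms on cohomology makes $\Psi_X$ \emph{conservative}, not faithful; a nonzero morphism in a derived category can certainly induce zero on every cohomology object. Since the whole strategy rests on upgrading fullness to full faithfulness, this gap is fatal to the first approach.

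The proposed contradiction is also broken: you plan to exhibit a nonzero compact object of $\DCAT(\REP(\Ga))=\DCAT(\QCOH(B_k\Ga))$ and then use the equivalence to produce a nonzero compact in $\DQCOH(B_k\Ga)$. But Proposition~\ref{P:BGa:nocompacts} already says that $\DCAT(\QCOH(B_k\Ga))$ itself has no nonzero compact objects, so no such object exists to begin with---and an equivalence between two categories, neither of which has nonzero compacts, is not contradictory. Your fallback is closer in spirit but still too vague: the structure sheaf $\Orb_X$ is not the right test object, and the discrepancy you are hoping for does not arise from a generic derived-limit term.

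What the paper actually does is construct, via Neeman's example, a specific bounded complex $L\in\DCAT^{[0,n-1]}(\QCOH(X))$ (a pushforward from $B_k\Ga$ along a quasi-affine map) such that $S=\bigoplus_{i\geq 0}L[in]$ is bounded above while $P=\prod_{i\geq 0}L[in]$ is not. Lemma~\ref{L:adjunction} gives $\Phi_X\Psi_X(S)\simeq P$. Fullness of $\Psi_X$ alone (no faithfulness needed) then makes the composite $\Hom(P,S)\xrightarrow{\Psi_X}\Hom(\Psi_XP,\Psi_XS)\cong\Hom(P,\Phi_X\Psi_XS)=\Hom(P,P)$ surjective, so the identity on $P$ lifts to a splitting $P\to S$, exhibiting $P$ as a summand of $S$. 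This is impossible since $S$ is bounded above and $P$ is not.
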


We were unable to determine whether the functor $\Psi_X$ in Theorem 
\ref{T:notfull} is faithful or not. For stacks with non-affine stabilizer groups the situation is even worse:
if $X=BE$, where $E$ is an elliptic
curve over $\mathbb{C}$, then the functor $\Psi_X^b\colon \DCAT^b(\COH(X))
\to\DCAT^b_{\mathrm{Coh}}(X)$ is neither essentially surjective nor
full. 

Note that when $X$ has affine
diagonal or is noetherian and affine-pointed, the first claim in Theorem \ref{T:compact-generation} is a trivial consequence
of Theorems \ref{T:cpt-gen_Psi-equiv} and \ref{T:notfull}. 
\subsection*{Left-completeness}
In the course of proving Theorem \ref{T:notfull}, we will prove
that the triangulated category $\DCAT(\QCOH(X))$ is not left-complete
whenever $X$ is {\badstack} with affine diagonal. This
generalizes an example of Neeman \cite{MR2875857} and amplifies some
observations of Drinfeld--Gaitsgory
\cite[Rem.~1.2.10]{MR3037900}.

In Appendix 
\ref{APP:left-complete}, we will prove that $\DQCOH(X)$ is left-complete
for all algebraic stacks $X$. An analogous assertion in the context of
derived algebraic geometry has been addressed by Drinfeld--Gaitsgory
\cite[Lem.~1.2.8]{MR3037900}. In the Stacks Project
\cite[\href{http://stacks.math.columbia.edu/tag/08IY}{08IY}]{stacks-project}
a similar result has been proved, albeit in a different context. 

A requirement for a triangulated category to be left-complete is that
it admits countable products. We were unable to locate a proof in the
literature that $\DQCOH(X)$ admits countable products, however. Thus we
also address this in Appendix \ref{APP:left-complete}. By
\cite[Cor.~1.18]{MR1812507}, it suffices to prove that $\DQCOH(X)$
is well generated. 
\subsection*{Well generation}
If $X$ is an algebraic stack, then it is known that $\DQCOH(X)$ is well generated in the following cases:
\begin{itemize}
\item $X$ is Deligne--Mumford \cite[Prop.~2.3.13(3)]{dag8}; or
\item $X$ is quasi-compact with affine diagonal \cite[Prop.~3.4.17(1)]{dag8}; or
\item $X$ is a $\Q$-stack that is 
  quasi-compact and quasi-separated with affine stabilizers and finitely presented inertia 
  \cite[Thm.~1.4.10]{MR3037900}. 
\end{itemize}
This list is quite extensive, covering the majority of algebraic stacks encountered in this article and in practice. We are not aware of a reference that $\DQCOH(X)$ is well generated in complete generality, however.

In
Appendix \ref{APP:local-grothendieck} we show that if $\ca$ is a Grothendieck
abelian category and $\cm \subseteq
\ca$ is a weak Serre subcategory that is closed under coproducts and is
Grothendieck abelian, then
$\DCAT_{\cm}(\ca)$ is a well generated triangulated category---a result we expect to be of
independent interest.
Since the inclusion $\QCOH(X) \subseteq \MOD(X)$ has these properties, this establishes that $\DQCOH(X)$ is well generated.

While we hoped that the well generation of $\DCAT_{\cm}(\ca)$, which is essentially a 
set-theoretic issue, would follow from some general results in the theory of presentable categories (even of the 
$\infty$-category type), we did not succeed along those lines. Instead, we give a direct proof using the Gabriel--Popescu Theorem.
We also wish to point out that 
while \cite[Prop.~14.2.4]{MR2182076} is quite general, it does not apply in
our situation. Indeed, they
require that the embedding $\cm \subseteq \ca$ is closed under
$\ca$-subquotients (i.e., $\cm$ is a Serre subcategory of $\ca$), which is not the case for 
$\QCOH(X) \subseteq \MOD(X)$.  

Note that we do not understand, in general, under what circumstances
 localizing subcategories of well generated categories
are generated by sets of objects. The question goes back to the early 
1970s, when Adams claimed to prove that it is possible to localize spectra
with respect to arbitrary homology and cohomology theories. Bousfield pointed
out the gap in Adams' argument and half-fixed it: he showed that it
is possible to localize spectra with respect to homology, see
\cite{Bousfield75,Bousfield79A}.

We restate the problem in the language here: let $\ct$ be the homotopy category
of spectra, which is well-known to be well generated (even compactly
generated). Given a homology theory $E$ and a cohomology theory $F$, we
can define localizing subcategories
\[
\mathcal{L}(E)=\{X\in\ct\suchthat E(X)=0\},\qquad
\mathcal{L}(F)=\{X\in\ct\suchthat F(X)=0\}.
\]
Bousfield proved that $\mathcal{L}(E)$ has a set of generators, and conjectured
that so does $\mathcal{L}(F)$. This conjecture remains open, 
despite considerable
effort over the intervening four decades. Some recent highlights of the literature are 
\cite{Casacuberta-Scevenels-Smith05,MR1997044,MR2166357}.

\section{Preliminaries}
Let $\phi\colon X \to Y$ be a quasi-compact and quasi-separated
morphism of algebraic stacks. Then the restriction of the functor $\MODPSH{\phi} \colon \MOD(X) 
\to \MOD(Y)$ to $\QCOH(X)$ factors through $\QCOH(Y)$ \cite[Lem.~6.5(i)]{MR2312554}, giving rise to a functor $\QCOHPSH{\phi} \colon \QCOH(X) \to \QCOH(Y)$. Since the categories $\MOD(X)$ and $\QCOH(X)$ are
Grothen\-dieck abelian
\cite[\href{http://stacks.math.columbia.edu/tag/0781}{0781}]{stacks-project},  the unbounded derived functors of $\MODPSH{\phi}$ and $\QCOHPSH{\phi}$ exist
\cite[\href{http://stacks.math.columbia.edu/tag/079P}{079P} \& \href{http://stacks.math.columbia.edu/tag/070K}{070K}]{stacks-project},
and we denote these as $\RDERF\MODPSH{\phi}$ and
$\RDERF \QCOHPSH{\phi}$, respectively. By \cite[Lem.~6.20]{MR2312554}, the
restriction of $\RDERF\MODPSH{\phi}$ to $\DQCOH^+(X)$ factors uniquely
through $\DQCOH^+(Y)$. If, in addition, $\phi$ is concentrated (e.g.,
representable), then the restriction of $\RDERF\MODPSH{\phi}$ to
$\DQCOH(X)$ factors through $\DQCOH(Y)$ (see \cite[Lem.~2.1]{hallj_coho_bc} for the representable case and \cite[Thm.~2.6(2)]{perfect_complexes_stacks} in general).

For an algebraic stack $W$ let $\Psi_W \colon \DCAT(\QCOH(W)) \to
\DQCOH(W)$ denote the natural functor. The universal properties of
right-derived functors provide a diagram:
\[
\xymatrix@C10ex{\DCAT(\QCOH(X))
  \ar[r]^-{\RDERF\QCOHPSH{\phi}} \ar[d]_{\Psi_X} & \ar[d]^{\Psi_Y} \DCAT(\QCOH(Y))
  \\ \DQCOH(X)\ar[r]^-{\RDERF\MODPSH{\phi}} & \DQCOH(Y), }
\] 
together with a natural transformation of functors:
\begin{equation}
  \epsilon_\phi \colon \Psi_Y\circ \RDERF\QCOHPSH{\phi}
  \Rightarrow \RDERF\MODPSH{\phi} \circ \Psi_X.\label{eq:natural_trans}
\end{equation}
The following result, for schemes, is well-known
\cite[B.8]{MR1106918}; for algebraic spaces, see \cite[Tags \spref{09TH} \& \spref{08GX}]{stacks-project}.
\begin{proposition}\label{P:coho_all}
  Let $\phi\colon X \to Y$ be a morphism of algebraic stacks. Suppose
  that both $X$ and $Y$ are quasi-compact with affine diagonal or noetherian and 
  affine-pointed. If
  $M \in \DCAT^+(\QCOH(X))$, then the morphism induced by (\ref{eq:natural_trans}):
  \[
  \epsilon_\phi(M) \colon \Psi_Y\circ \RDERF\QCOHPSH{\phi}(M)
  \to \RDERF\MODPSH{\phi} \circ \Psi_X(M)
  \]
  is an isomorphism. In particular, since $\Psi_Y^+ \colon
  \DCAT^+(\QCOH(Y)) \to \DQCOH^+(Y)$ is an equivalence
  (Theorem \ref{T:bdd_below}), it follows that there is a natural
  isomorphism for each $M \in \DCAT^+(\QCOH(X))$:
  \[
  \RDERF\QCOHPSH{\phi}(M) \to (\Psi^+_Y)^{-1}\circ \RDERF\MODPSH{\phi} \circ \Psi_X^+(M).
  \]
\end{proposition}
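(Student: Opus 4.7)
The strategy is a way-out reduction to quasi-coherent sheaves concentrated in degree zero, followed by a \v{C}ech-cohomology comparison after localizing on $Y$.

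First, both $\Psi_Y\circ\RDERF\QCOHPSH{\phi}$ and $\RDERF\MODPSH{\phi}\circ\Psi_X$ are way-out right functors on $\DCAT^+(\QCOH(X))$: each sends $\DCAT^{\geq n}$ into $\DCAT^{\geq n}$ since the underlying pushforward functors are left exact. By Hartshorne's way-out lemma, it then suffices to check that $\epsilon_\phi(\mathcal{F})$ is an isomorphism for each $\mathcal{F}\in\QCOH(X)$ placed in degree zero.

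Next, the question for such an $\mathcal{F}$ is smooth-local on $Y$. Choosing a smooth surjection $V\to Y$ from an affine scheme and applying flat base change for both $\RDERF\QCOHPSH{}$ and $\RDERF\MODPSH{}$, I reduce to the case $Y=\spec R$. I must then show that the derived $R$-module computed from an injective resolution of $\mathcal{F}$ inside $\QCOH(X)$ matches that computed from an injective resolution of $\Psi_X(\mathcal{F})$ inside $\MOD(X)$. In the affine-diagonal case I would pick a smooth surjection $U\to X$ with $U$ affine; each term $U_n$ of the \v{C}ech nerve is then affine, and so $\mathcal{F}|_{U_n}$ has vanishing higher cohomology in both the quasi-coherent and lisse-\'etale senses. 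The totalization of the cosimplicial $R$-module $\bigl(\Gamma(U_n,\mathcal{F}|_{U_n})\bigr)_{n\geq 0}$ therefore computes both sides of $\epsilon_\phi(\mathcal{F})$, and the natural map between them is tautologically an isomorphism. In the noetherian affine-pointed case an affine \v{C}ech nerve is unavailable; one instead argues by d\'evissage via coherent subsheaves of $\mathcal{F}$ together with the affine-pointedness hypothesis to match the two derived pushforwards on successive quotients, using Grothendieck vanishing to control cohomological amplitudes.

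The principal technical obstacle is the noetherian affine-pointed case: the clean \v{C}ech-theoretic mechanism available under the affine-diagonal hypothesis has no direct analogue there, so one must combine the noetherian structure of $\QCOH(X)$ with affine-pointedness to carry out the sheaf-level comparison and stitch the two regimes together into a single uniform statement.
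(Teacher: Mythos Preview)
Your way-out reduction to sheaves in degree zero matches the paper's opening move. After that, the approaches diverge substantially.

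The paper does not localize on $Y$ or invoke a \v{C}ech nerve. Instead it observes that both $N \mapsto \RDERF^i\QCOHPSH{\phi}N$ and $N \mapsto \RDERF^i\MODPSH{\phi}N$ are $\delta$-functors on $\QCOH(X)$ agreeing in degree zero; since the first is universal, it suffices to show the second is effaceable in $\QCOH(X)$, i.e., that $\RDERF^i\MODPSH{\phi}I = 0$ for $i>0$ whenever $I$ is injective in $\QCOH(X)$. This is Lemma~\ref{L:bdd_below_coho}\itemref{L:bdd_below_coho:local}, proved via the structure of quasi-coherent injectives (Lemma~\ref{L:inj_stk}): in the affine-diagonal case $I$ is a summand of $p_*J$ with $p\colon \spec A\to X$ smooth surjective and $J$ an injective $A$-module; in the noetherian case $I$ is a summand of a filtered colimit of pushforwards $\gamma_*G$ from artinian schemes, and affine-pointedness makes each such $\gamma$ affine. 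In both regimes the vanishing is then a short direct computation, and the argument is uniform.

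Your \v{C}ech argument for the affine-diagonal case is plausible but leaves a nontrivial step unaddressed: you assert that the totalization of $n\mapsto\Gamma(U_n,\mathcal{F}|_{U_n})$ computes $\RDERF\QCOHPSH{\phi}\mathcal{F}$, but this requires knowing that the terms $p_{n*}p_n^*\mathcal{F}$ of the \v{C}ech resolution are acyclic for $\QCOHPSH{\phi}$. That can be shown, but the justification essentially passes through the same structure-of-injectives facts the paper uses directly.

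The genuine gap is the noetherian affine-pointed case. What you propose---d\'evissage via coherent subsheaves plus Grothendieck vanishing---is not an argument: it is unclear how passing to coherent sheaves helps compare the two derived functors, how Grothendieck vanishing yields an isomorphism rather than merely a bound on cohomological amplitude, or where exactly affine-pointedness enters. You yourself flag this as the principal obstacle. The paper's $\delta$-functor reduction to injectives, combined with the description of injectives as colimits of pushforwards from artinian schemes (affine over $X$ precisely by affine-pointedness), resolves this case on equal footing with the affine-diagonal one and avoids the need for any separate mechanism.
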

\begin{proof}
  The functors $\QCOHPSH{\phi}$ and $\MODPSH{\phi}$ are left-exact, thus the
  functors $\RDERF\QCOHPSH{\phi}$ and $\RDERF\MODPSH{\phi}$ are
  bounded below. Via standard ``way-out'' arguments, one readily
  reduces to proving the isomorphism above in the case
  $M\simeq N[0]$, where $N \in \QCOH(X)$. The isomorphism, in this case,
  reduces to proving that if $N\in \QCOH(X)$, then the natural
  morphism $\RDERF^i\QCOHPSH{\phi}N \to \RDERF^i\MODPSH{\phi}N$ is an
  isomorphism for all integers $i\geq 0$, where
  $\RDERF^i\QCOHPSH{\phi}$ (resp.~$\RDERF^i\MODPSH{\phi}$) denotes
  the $i$th right-derived functor of $\QCOHPSH{\phi}$
  (resp.~$\MODPSH{\phi}$). A standard $\delta$-functor argument shows that it is 
  sufficient to prove that $\RDERF^i(\phi_{\lisset})_*I = 0$ for every $i>0$ and injective $I$ 
  of $\QCOH(X)$. This now follows from Lemma 
  \ref{L:bdd_below_coho}\itemref{L:bdd_below_coho:local}.
\end{proof}
\begin{corollary}\label{C:dsum}
  Let $\phi\colon X \to Y$ be a concentrated morphism of algebraic stacks.
  If $X$ and $Y$ are quasi-compact with affine
  diagonal or noetherian and affine-pointed, then there exists an integer $r\geq 0$ such that 
  for all $M \in \DCAT(\QCOH(X))$ and integers $n$ the natural map:
  \[
  \tau^{\geq n}\RDERF\QCOHPSH{\phi}M \to \tau^{\geq
    n}\RDERF\QCOHPSH{\phi}\tau^{\geq n-r}M
  \]
  is a quasi-isomorphism. It follows that
  \begin{enumerate}
  \item\label{CI:dsum:coprod}
    $\RDERF\QCOHPSH{\phi}$ preserves small coproducts,
  \item for all $M\in \DCAT(\QCOH(X))$ the natural morphism induced by
    (\ref{eq:natural_trans}):
    \[
    \epsilon_\phi(M) \colon \Psi_Y\circ \RDERF\QCOHPSH{\phi}M
    \to \RDERF\MODPSH{\phi} \circ \Psi_X(M)
    \]
    is an isomorphism, and
  \item\label{CI:dsum:qaff-cons}
    if $\phi$ is quasi-affine, then $\RDERF\QCOHPSH{\phi}$ is
    conservative.
  \end{enumerate}
\end{corollary}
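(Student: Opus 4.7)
The plan is to establish the truncation formula first, and then derive each of the three consequences from it combined with the previously established results.

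Since $\phi$ is concentrated, the functor $\RDERF\MODPSH{\phi}$ has finite cohomological amplitude, bounded by some integer $r \geq 0$, on objects of $\QCOH(X)$. By Proposition \ref{P:coho_all}, the natural comparison $\RDERF^i\QCOHPSH{\phi}\shv{F} \to \RDERF^i\MODPSH{\phi}\shv{F}$ is an isomorphism for every $\shv{F} \in \QCOH(X)$, so $\QCOHPSH{\phi}$ likewise has cohomological dimension at most $r$ on $\QCOH(X)$. Applying $\RDERF\QCOHPSH{\phi}$ to the canonical triangle $\tau^{<n-r}M \to M \to \tau^{\geq n-r}M \to \tau^{<n-r}M[1]$, it suffices to show that $\RDERF\QCOHPSH{\phi}\tau^{<n-r}M$ is cohomologically concentrated in degrees $<n$. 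When $\tau^{<n-r}M$ is bounded above this is a classical way-out argument via the hypercohomology spectral sequence $E_2^{p,q} = \RDERF^p\QCOHPSH{\phi}\COHO{q}(\tau^{<n-r}M) \Rightarrow \RDERF^{p+q}\QCOHPSH{\phi}\tau^{<n-r}M$, which has $E_2^{p,q} = 0$ whenever $p > r$ or $q \geq n-r$, forcing $p+q \leq n-1$ on every surviving diagonal. The main technical obstacle will be to promote this vanishing to complexes unbounded below, where spectral-sequence convergence is not automatic; the right tool is the existence of K-injective resolutions in the Grothendieck abelian category $\QCOH(X)$, together with the observation that the cohomological-dimension bound $r$ propagates through such a resolution.

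For consequence \ref{CI:dsum:coprod}, I feed $M = \bigoplus_\alpha M_\alpha$ into the truncation formula. Since $\tau^{\geq n-r}$ commutes with coproducts, both sides are identified with $\tau^{\geq n}\RDERF\QCOHPSH{\phi}\bigl(\bigoplus_\alpha \tau^{\geq n-r}M_\alpha\bigr)$, which only involves bounded-below complexes. In $\DCAT^+(\QCOH(X))$, the equivalence $\Psi_X^+$ of Theorem \ref{T:bdd_below} together with Proposition \ref{P:coho_all} identifies $\RDERF\QCOHPSH{\phi}$ with $\RDERF\MODPSH{\phi}$, which preserves small coproducts by concentration of $\phi$. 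Comparing cohomology in every degree then gives the claim. For consequence (2), I apply the truncation formula on both sides of $\epsilon_\phi(M)$ --- for $\RDERF\QCOHPSH{\phi}$ by what was just proved, and for $\RDERF\MODPSH{\phi}$ by concentration. The assertion reduces to whether $\epsilon_\phi(\tau^{\geq n-r}M)$ is an isomorphism, which is exactly Proposition \ref{P:coho_all} since $\tau^{\geq n-r}M \in \DCAT^+(\QCOH(X))$. Hence $\tau^{\geq n}\epsilon_\phi(M)$ is an isomorphism for every $n$, and left-completeness of $\DQCOH(Y)$ (Appendix \ref{APP:left-complete}) assembles these truncated isomorphisms into an isomorphism of the full complexes.

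Finally, for consequence \ref{CI:dsum:qaff-cons}, factor the quasi-affine morphism as $\phi = p \circ j$, where $p\colon \spec_Y(\phi_*\Orb_X) \to Y$ is affine and $j$ is a quasi-compact open immersion. The functor $\QCOHPSH{p}$ is exact and conservative on quasi-coherent sheaves, so $\RDERF\QCOHPSH{p} = \QCOHPSH{p}$ is conservative on $\DCAT(\QCOH)$. For the open immersion $j$, the exact pullback $j^*$ is a left adjoint with $j^*\QCOHPSH{j} = \id$ on $\QCOH(X)$, and this identity persists in $\DCAT(\QCOH(X))$ since $j^* = \LDERF j^*$; hence $\RDERF\QCOHPSH{j}$ is fully faithful and in particular conservative. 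Because $\QCOHPSH{p}$ is exact, the Grothendieck composition law yields $\RDERF\QCOHPSH{\phi} = \QCOHPSH{p} \circ \RDERF\QCOHPSH{j}$, a composition of two conservative functors, which is therefore itself conservative.
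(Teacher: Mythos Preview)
Your argument is correct and tracks the paper's approach: the paper obtains the cohomological-dimension bound on $\RDERF^i\QCOHPSH{\phi}$ exactly as you do (via concentration and Proposition~\ref{P:coho_all}), then cites \cite[\spref{07K7}]{stacks-project} for the truncation formula---which is the K-injective argument you sketch---and declares (1)--(3) to be ``simple consequences of the main claim and Proposition~\ref{P:coho_all}.''

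Two small remarks. In (2), the appeal to left-completeness is superfluous: once $\tau^{\geq n}\epsilon_\phi(M)$ is an isomorphism for every $n$, the cone of $\epsilon_\phi(M)$ is already acyclic, so $\epsilon_\phi(M)$ is a quasi-isomorphism without any further input. In (3), your Stein-factorization argument is more hands-on than the paper's implicit route, which would pass through (2) to reduce to the known conservativity of $\RDERF\MODPSH{\phi}$ on $\DQCOH$ for quasi-affine $\phi$; your version has the pleasant feature that it stays entirely inside $\DCAT(\QCOH)$ and never invokes the lisse-\'etale side.
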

\begin{proof}
  The claims (1)--(3) are all simple consequences of the main claim
  and Proposition \ref{P:coho_all}. Since $\phi$ is a concentrated
  morphism and $Y$ is quasi-compact and quasi-separated, there exists an integer $r\geq 0$ such that if $N \in
  \QCOH(X)$, then $\RDERF^i\MODPSH{\phi}N = 0$ for all $i>r$. By
  Proposition \ref{P:coho_all} it follows that
  $\RDERF^i\QCOHPSH{\phi}N = 0$ for all $i>r$ too. The result now
  follows from
  \cite[\href{http://stacks.math.columbia.edu/tag/07K7}{07K7}]{stacks-project}.
\end{proof}
\begin{corollary}\label{C:BGa:compacts_image_coherent}
  Let $X$ be an algebraic stack that is quasi-compact with affine diagonal or noetherian and 
  affine-pointed. If
  $C$ is a compact object of either $\DCAT(\QCOH(X))$ or $\DQCOH(X)$,
  then $C$ is perfect. Moreover if $X$ is noetherian, then $C$ is
  quasi-isomorphic to a bounded complex of coherent sheaves on $X$.
\end{corollary}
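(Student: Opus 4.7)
My plan is to reduce to the case of an affine scheme via a smooth atlas, invoke the Thomason--Trobaugh/Neeman theorem there that compact objects are perfect, and then transport perfectness back to $X$ by smooth descent. Since $X$ is quasi-compact, I would choose a smooth surjection $\phi\colon U \to X$ with $U$ an affine scheme. In either hypothesis on $X$, the morphism $\phi$ is concentrated: when $X$ has affine diagonal, $U \times_X U$ is affine (obtained from $U\times U$ by base change along the affine diagonal), and hence $\phi$ is even affine; when $X$ is noetherian and affine-pointed, $\phi$ is automatically quasi-compact and quasi-separated because $X$ itself is. Thus Corollary~\ref{C:dsum} applies to $\phi$, and in particular the right adjoints $\RDERF\QCOHPSH{\phi}$ and $\RDERF\MODPSH{\phi}|_{\DQCOH(X)}$ preserve small coproducts.

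Next I would use the adjoint formalism to push compactness down to $U$. Since $\phi$ is smooth (in particular flat), the left adjoint of $\RDERF\QCOHPSH{\phi}$ is the exact pullback $\phi^*_{\QCOH}\colon \QCOH(X)\to \QCOH(U)$ extended termwise, and the left adjoint of $\RDERF\MODPSH{\phi}$ on $\DQCOH$ is $\LDERF\phi^*=\phi^*$. A left adjoint whose right adjoint preserves small coproducts necessarily preserves compact objects, so in both cases the pullback of $C$ lives in $\DCAT(\QCOH(U))=\DCAT(A\text{-Mod})=\DQCOH(U)$ (for $U=\spec A$) as a compact object. By Thomason--Trobaugh this pullback is a perfect complex. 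Since being a perfect complex is smooth-local on $X$, we conclude that in the $\DQCOH(X)$ case $C$ is perfect, and in the $\DCAT(\QCOH(X))$ case $\Psi_X(C)$ is perfect on $X$. To deduce that $C$ itself is perfect in the second case, observe that a perfect complex is bounded, and $\Psi_X$ preserves cohomology sheaves (the inclusion $\QCOH(X)\subseteq \MOD(X)$ is fully faithful), so $C$ lies in $\DCAT^b(\QCOH(X))$; the bounded equivalence underlying Theorem~\ref{T:bdd_below} then identifies $C$ with $\Psi_X(C)$, giving that $C$ is perfect.

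Finally, for the noetherian addendum, a perfect complex is bounded with coherent cohomology sheaves, and on a noetherian algebraic stack every such object of $\DCAT(\QCOH(X))$ is quasi-isomorphic to a bounded complex of coherent sheaves (resolve from the right by coherent subsheaves, using that every quasi-coherent sheaf is a filtered colimit of its coherent subsheaves on a noetherian stack, and truncate). The main obstacle is the $\DCAT(\QCOH(X))$ case, where we must separately establish perfectness of $C$ rather than merely of its image $\Psi_X(C)$; once the adjunction argument on the $\QCOH$-pushforward and the bounded equivalence $\Psi_X^b$ are in place, the conclusion is formal.
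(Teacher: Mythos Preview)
Your approach is essentially the paper's: for $\DCAT(\QCOH(X))$ both you and the paper pull back along a smooth atlas $p\colon U=\spec A\to X$, use Corollary~\ref{C:dsum}\itemref{CI:dsum:coprod} to see that $\RDERF\QCOHPSH{p}$ preserves coproducts, deduce that the pullback of $C$ is compact in $\DCAT(\MOD(A))$, hence perfect, and conclude by smooth-locality. For $\DQCOH(X)$ the paper simply quotes \cite[Lem.~4.4(1)]{perfect_complexes_stacks} rather than rerunning the atlas argument, and for the noetherian addendum it cites \cite[Prop.~15.4]{MR1771927} together with \cite[II.2.2]{MR0354655} instead of sketching the coherent resolution; these are just different packagings of the same content.

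One correction: your justification that $\phi$ is concentrated in the noetherian affine-pointed case is not right. Being quasi-compact and quasi-separated does not imply being concentrated for morphisms of stacks (think of $\spec k \to B_kG$ for $G$ {\badgroup}). The correct reason, valid in both cases, is that $\phi\colon U\to X$ is a morphism from a scheme and hence is representable by algebraic spaces; representable morphisms are concentrated, which is exactly what you need to invoke Corollary~\ref{C:dsum}.
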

\begin{proof}
  Let $C$ be a compact object of $\DQCOH(X)$. By
  \cite[Lem.~4.4(1)]{perfect_complexes_stacks}, $C$ is a perfect complex
  and in particular belongs to $\DQCOH^b(X) \subseteq \DQCOH^+(X)$. By
  Theorem~\ref{T:bdd_below}, it follows that $C\simeq
  \Psi_X(\tilde{C})$ for some $\tilde{C} \in\DCAT(\QCOH(X))$. If $X$
  is noetherian, $\tilde{C}$ even belongs to
  $\DCAT_{\COH(X)}^b(\QCOH(X))$. Combining
  \cite[Prop.~15.4]{MR1771927} with \cite[II.2.2]{MR0354655}, we
  deduce that $C$ belongs to the image of $\DCAT(\COH(X)) \to
  \DQCOH(X)$.

  Now let $C$ be a compact object of $\DCAT(\QCOH(X))$. Let $p\colon U
  \to X$ be a smooth surjection from an affine scheme $U$. The functor
  $p^*_{\QCOH} \colon \QCOH(X) \to \QCOH(U)$ is exact and gives rise to a
  derived functor $\LDERF p_{\QCOH}^*\colon \DCAT(\QCOH(X)) \to
  \DCAT(\QCOH(U))$. Its right adjoint $\RDERF
  \QCOHPSH{p}$ preserves small coproducts by
  Corollary \ref{C:dsum}\eqref{CI:dsum:coprod}, so $\LDERF p_{\QCOH}^*C
  \in \DCAT(\QCOH(U))$ is compact. Since $U=\spec A$ is
  affine, it follows that $\QCOH(U) \cong \MOD(A)$ and so $\LDERF
  p_{\QCOH}^*C$ is a perfect complex
  \cite[\href{http://stacks.math.columbia.edu/tag/07LT}{07LT}]{stacks-project}.
  If $X$ is noetherian, then $C \in \DCAT^b_{\COH(X)}(\QCOH(X))$. Arguing as
  before, we deduce that $C$ belongs to the image of $\DCAT(\COH(X))
  \to \DCAT(\QCOH(X))$.
\end{proof}
In the following Lemma we will give a sufficient condition for compactness
of a perfect object in $\DCAT(\QCOH(X))$. We do not know if this condition
is necessary. The analogous condition in $\DQCOH(X)$ is necessary~\cite[Lem.~4.5]{perfect_complexes_stacks}.
\begin{lemma}\label{L:almost_char_compact}
  Let $X$ be an algebraic stack that is quasi-compact with affine diagonal or noetherian and 
  affine-pointed. Let $P\in \DCAT(\QCOH(X))$ be a perfect complex. Consider the following
  conditions
  \begin{enumerate}
  \item\label{item:almost_char_compact:base} $P$ is a compact
    object of $\DCAT(\QCOH(X))$.
  \item\label{item:almost_char_compact:bddext} There exists an integer
    $r\geq 0$ such that $\Hom_{\Orb_X}(P,N[i])=0$ for all
    $N\in\QCOH(X)$ and $i>r$.
  \item\label{item:almost_char_compact:stabletrunc} There exists an
    integer $r\geq 0$ such that the natural map
    \[
    \tau^{\geq j}\RHom_{\DCAT(\QCOH(X))}(P,M) \to \tau^{\geq
      j}\RHom_{\DCAT(\QCOH(X))}(P,\tau^{\geq j-r}M) 
    \]
    is a quasi-isomorphism for all $M \in \DCAT(\QCOH(X))$ and
    integers $j$.
  \end{enumerate}
  Then \itemref{item:almost_char_compact:bddext} and
  \itemref{item:almost_char_compact:stabletrunc} are equivalent and imply
  \itemref{item:almost_char_compact:base}.
\end{lemma}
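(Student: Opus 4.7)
The plan is to establish three implications: (3)$\Rightarrow$(2), (2)$\Rightarrow$(3), and (2)$\Rightarrow$(1). Since $\Psi_X^+$ is already an equivalence on bounded-below complexes (Theorem~\ref{T:bdd_below}), everything will reduce to truncation arguments combined with facts available in $\DQCOH^+(X)$.

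The direction (3)$\Rightarrow$(2) is immediate. Take $N \in \QCOH(X)$ as a complex in degree $0$ and apply (3) with $M = N$ and $j = r+1$: since $\tau^{\geq 1}N = 0$, condition (3) yields $\tau^{\geq r+1}\RHom_{\DCAT(\QCOH(X))}(P,N) \simeq 0$, equivalently $\Ext^i(P,N) = 0$ for all $i > r$, which is (2).

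For (2)$\Rightarrow$(3), I would observe that (2) asserts exactly that the triangulated functor $\RHom_{\DCAT(\QCOH(X))}(P,-) \colon \DCAT(\QCOH(X)) \to \DCAT(\AB)$ has cohomology vanishing in degrees $>r$ on objects of $\QCOH(X)$. The truncation principle for triangulated functors of finite cohomological dimension, \cite[\spref{07K7}]{stacks-project} (used in the same way in the proof of Corollary~\ref{C:dsum}), then delivers (3) formally.

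For (2)$\Rightarrow$(1), let $\{M_\alpha\}_{\alpha \in A}$ be a small family in $\DCAT(\QCOH(X))$ and set $M = \bigoplus_\alpha M_\alpha$. Applying the already-established (3) with $j = 0$, and using that $\tau^{\geq -r}$ commutes with coproducts, gives
\[
\Hom_{\DCAT(\QCOH(X))}(P, M) \;\cong\; H^0\RHom\bigl(P,\textstyle\bigoplus_\alpha \tau^{\geq -r}M_\alpha\bigr).
\]
Since $\bigoplus_\alpha \tau^{\geq -r}M_\alpha \in \DCAT^{\geq -r}(\QCOH(X)) \subseteq \DCAT^+(\QCOH(X))$, Theorem~\ref{T:bdd_below} lets me compute this $\RHom$ inside $\DQCOH^+(X)$. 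There $\Psi_X(P)$ is perfect, so $\SRHOM(\Psi_X P,-) \simeq (\Psi_X P)^{\vee} \ltensor (-)$ commutes with arbitrary coproducts, while under the affine-diagonal or noetherian affine-pointed hypothesis the functor $\RDERF\Gamma(X,-)$ has finite cohomological dimension on $\DQCOH^+(X)$ and therefore commutes with coproducts of uniformly bounded-below complexes. Putting this together yields $\Hom(P,M) \cong \bigoplus_\alpha \Hom(P, \tau^{\geq -r}M_\alpha)$, and a final application of (3) to each $M_\alpha$ separately identifies this with $\bigoplus_\alpha \Hom(P, M_\alpha)$, so $P$ is compact.

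The main obstacle I expect is the last step of the (2)$\Rightarrow$(1) argument: verifying that $\RHom_{\DQCOH(X)}(\Psi_X P,-)$ commutes with coproducts of uniformly bounded-below complexes \emph{globally} on $X$. The local perfectness of $\Psi_X P$ takes care of the sheaf-$\SHOM$ side, but the global statement requires controlling $\RDERF\Gamma(X,-)$, which is precisely where the affine-diagonal or noetherian affine-pointed hypotheses (via the finite cohomological dimension established in Lemma~\ref{L:bdd_below_coho} and Proposition~\ref{P:coho_all}) become essential.
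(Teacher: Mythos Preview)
Your (3)$\Rightarrow$(2) is fine. For (2)$\Rightarrow$(3), be aware that \cite[\spref{07K7}]{stacks-project} is stated for the right derived functor $\RDERF F$ of a left-exact functor between abelian categories, not for $\RHom(P,-)$ with $P$ a bounded complex; in Corollary~\ref{C:dsum} the functor really is $\RDERF\QCOHPSH{\phi}$, so the citation applies there, but here it does not. The paper reproves the brutal-truncation argument by hand: take a K-injective resolution $M\to I^\bullet$ with injective terms, use boundedness of $P$ to see that $\Hom^\bullet(P,I^\bullet)$ in a fixed degree only involves finitely many $I^j$, and then use condition~(2) to pass from brutal to canonical truncations. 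The idea you have is right but the details are not free.

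The genuine gap is in (2)$\Rightarrow$(1). You assert that under the standing hypotheses $\RDERF\Gamma(X,-)$ has finite cohomological dimension on $\DQCOH^+(X)$. This is \emph{false}: for $X=B_k\Ga$ in positive characteristic, which is noetherian with affine diagonal, one has $H^i(X,\Orb_X)\neq 0$ for all $i\geq 0$---this unboundedness is exactly what drives the main theorems of the paper. Lemma~\ref{L:bdd_below_coho} only shows that injectives of $\QCOH(X)$ are acyclic for lisse-\'etale pushforward, which is far weaker than finite cohomological dimension. So your step passing the coproduct through $\RDERF\Gamma(X,-)$ fails.

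The fix is to bypass $\RDERF\Gamma$: condition~(2), transported along the equivalence $\Psi^+_X$ of Theorem~\ref{T:bdd_below}, says that $\Psi_X(P)$ satisfies the bounded-Ext criterion of \cite[Lem.~4.5]{perfect_complexes_stacks}, which in $\DQCOH(X)$ is \emph{equivalent} to compactness. Hence $\RHom_{\DQCOH(X)}(\Psi_X P,-)$ commutes with all coproducts, in particular with your uniformly bounded-below one, and the remainder of your argument goes through. The paper packages this last step as Theorem~\ref{T:bdd_below} together with \cite[Lem.~1.2(3)]{perfect_complexes_stacks}.
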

\begin{proof}
  Condition \itemref{item:almost_char_compact:bddext} is a special case
  of \itemref{item:almost_char_compact:stabletrunc}: let $M=N[i]$
  and $j=0$.

  Conversely, assume that condition \itemref{item:almost_char_compact:bddext}
  holds and let $M \in \DCAT(\QCOH(X))$.
  Since $\QCOH(X)$ is Grothendieck abelian, there is a
  quasi-isomorphism $M \to I^\bullet$ in $\DCAT(\QCOH(X))$, where
  $I^\bullet$ is K-injective and $I^j$ is injective for every integer
  $j$ \cite{MR1948842}.

  Let $p\geq r+1$ be an integer with the property that
  $P \in \DCAT^{\geq -p+1}(\QCOH(X))$. Then the natural morphism of
  chain complexes:
  \begin{equation}
    \tau^{\geq j}\Hom^\bullet_{\KCAT(\QCOH(X))}(P,I^\bullet) \to
    \tau^{\geq j}\Hom^{\bullet}_{\KCAT(\QCOH(X))}(P,\sigma^{\geq
      j-p}I^{\bullet}), \label{eq:brutal}
  \end{equation}
  where $\sigma$ is the brutal truncation, is a
  quasi-isomorphism. For every integer $j$ there is also a
  morphism $s_j\colon \sigma^{\geq j}I^\bullet \to \trunc{\geq
    j}M$. If $C_j^\bullet$ is the mapping cone of $s_j$, then
  $C_j^\bullet \homotopic d(I^{j-1})[-(j-1)]$.
  Thus, by condition \itemref{item:almost_char_compact:bddext}, we have
  for every integer $j$
  \[
  \trunc{\geq j+r}\RHom_{\DCAT(\QCOH(X))}(P,C_j^\bullet) \homotopic 0.
  \]
  Since there is also
  distinguished triangle for every integer $j$:
  \[
  \RHom_{\Orb_X}(P,\sigma^{\geq j-p}I^\bullet) \to
  \RHom_{\Orb_X}(P,\trunc{\geq j-p}M) \to
  \RHom_{\Orb_X}(P,C_{j-p}^\bullet),
  \]
  it follows that for every integer $j$ there is a quasi-isomorphism:
  \[
  \trunc{\geq j}  \RHom_{\Orb_X}(P,\sigma^{\geq
    j-p}I^\bullet) \homotopic \trunc{\geq
    j}\RHom_{\Orb_X}(P,\trunc{\geq j-p}M). 
  \]
  For every integer $j$, we also have a distinguished triangle
  \[
  H^{j-r-1}(M)[-(j-r-1)]\to \tau^{\geq j-r-1}M\to \tau^{\geq j-r}M.
  \]
  As before, it follows that $\tau^{\geq j}\RHom_{\Orb_X}(P,\tau^{\geq
    j-r-1}M)\homotopic \tau^{\geq j}\RHom_{\Orb_X}(P,\tau^{\geq j-r}M)$ and
  thus by induction:
  \[
  \tau^{\geq j}\RHom_{\Orb_X}(P,\tau^{\geq j-p}M) \to \tau^{\geq
    j}\RHom_{\Orb_X}(P,\tau^{\geq j-r}M).
  \]
  Combining these quasi-isomorphisms with 
  \eqref{eq:brutal} gives \itemref{item:almost_char_compact:stabletrunc}.

  For \itemref{item:almost_char_compact:stabletrunc} implies
  \itemref{item:almost_char_compact:base}: this follows
  from Theorem \ref{T:bdd_below}
  and~\cite[Lem.~1.2(3)]{perfect_complexes_stacks}.
\end{proof}
We now relate compact generation in $\DCAT(\QCOH(X))$ with
compact generation in $\DQCOH(X)$.
\begin{lemma}\label{L:compacts-aff-diag}
  Let $X$ be an algebraic stack that is quasi-compact with affine diagonal or noetherian and 
  affine-pointed.
  \begin{enumerate}
  \item\label{L:compacts-aff-diag:item:char} If $P\in \DCAT(\QCOH(X))$
    is a perfect complex such that $\Psi(P)$ is compact
    in $\DQCOH(X)$, then $P$ is compact in $\DCAT(\QCOH(X))$.
  \item\label{L:compacts-aff-diag:item:fcd} If $X$ has finite
    cohomological dimension, then every perfect complex is compact in
    both $\DCAT(\QCOH(X))$ and $\DQCOH(X)$.
  \item\label{L:compacts-aff-diag:item:gens} If a set of objects
    $\{P_i\}$ of $\DCAT(\QCOH(X))$ has the property that
    $\{\Psi(P_i)\}$ compactly generates $\DQCOH(X)$, then $\{P_i\}$
    compactly generates $\DCAT(\QCOH(X))$.
  \end{enumerate}
\end{lemma}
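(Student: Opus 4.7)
For~\itemref{L:compacts-aff-diag:item:char}, I would verify condition~\itemref{item:almost_char_compact:bddext} of Lemma~\ref{L:almost_char_compact}. Compactness of $\Psi(P)$ in $\DQCOH(X)$, combined with~\cite[Lem.~4.5]{perfect_complexes_stacks}, supplies $r\geq 0$ with $\Hom_{\DQCOH(X)}(\Psi(P),N[i])=0$ for every $N\in\QCOH(X)$ and $i>r$. Perfectness forces $P\in\DCAT^+(\QCOH(X))$, and since $N[i]\in\DCAT^+(\QCOH(X))$ as well, Theorem~\ref{T:bdd_below} identifies these Hom groups with $\Hom_{\DCAT(\QCOH(X))}(P,N[i])$; Lemma~\ref{L:almost_char_compact} then gives the compactness of $P$ in $\DCAT(\QCOH(X))$.

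For~\itemref{L:compacts-aff-diag:item:fcd}, write $d$ for the cohomological dimension of $X$. For any perfect $P$ with $P^\vee$ of amplitude $[-a,b]$ and any $N\in\QCOH(X)$, the duality $\SRHom(P,N)\simeq P^\vee\ltensor N$ combined with the vanishing of higher cohomology of quasi-coherent sheaves above degree~$d$ yields $\Hom_{\DQCOH(X)}(P,N[i])=0$ for $i>b+d$. Hence the $\DQCOH$-analog of~\itemref{item:almost_char_compact:bddext} holds and~\cite[Lem.~4.5]{perfect_complexes_stacks} gives compactness of $P$ in $\DQCOH(X)$; part~\itemref{L:compacts-aff-diag:item:char} then lifts this to compactness in $\DCAT(\QCOH(X))$.

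For~\itemref{L:compacts-aff-diag:item:gens}, each $\Psi(P_i)$ is perfect by~\cite[Lem.~4.5]{perfect_complexes_stacks}, so $P_i$ itself is perfect and compact in $\DCAT(\QCOH(X))$ by part~\itemref{L:compacts-aff-diag:item:char}. Since $\DCAT(\QCOH(X))$ is well generated (Appendix~\ref{APP:local-grothendieck}), generation by the compact set $\{P_i\}$ reduces to showing $M=0$ whenever $\Hom_{\DCAT(\QCOH(X))}(P_i[n],M)=0$ for all $i,n$. The crux of the argument is to prove that, for every $M\in\DCAT(\QCOH(X))$, the natural map
\[
\epsilon_M\colon\RHom_{\DCAT(\QCOH(X))}(P_i,M)\longrightarrow\RHom_{\DQCOH(X)}(\Psi(P_i),\Psi(M))
\]
is a quasi-isomorphism. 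Given this, the hypothesis forces $\RHom_{\DQCOH(X)}(\Psi(P_i),\Psi(M))=0$; compact generation of $\DQCOH(X)$ by $\{\Psi(P_i)\}$ gives $\Psi(M)=0$; and the conservativity of $\Psi$---cohomology computed in $\QCOH(X)$ or in $\MOD(X)$ coincides, so $\Psi(M)\simeq 0$ implies $M$ is acyclic---yields $M=0$.

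To prove $\epsilon_M$ is a quasi-isomorphism, I would apply $\tau^{\geq j}$ for arbitrary $j\in\Z$. Condition~\itemref{item:almost_char_compact:stabletrunc} of Lemma~\ref{L:almost_char_compact}, which holds for $P_i$ because~\itemref{item:almost_char_compact:bddext} does (by the argument in~\itemref{L:compacts-aff-diag:item:char}), replaces the domain with $\tau^{\geq j}\RHom_{\DCAT(\QCOH(X))}(P_i,\tau^{\geq j-r}M)$. Since $\tau^{\geq j-r}M$ is bounded below and $\Psi$ commutes with truncations, Theorem~\ref{T:bdd_below} then identifies this with $\tau^{\geq j}\RHom_{\DQCOH(X)}(\Psi(P_i),\tau^{\geq j-r}\Psi(M))$. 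Finally, the $\DQCOH(X)$-analog of~\itemref{item:almost_char_compact:stabletrunc} for $\Psi(P_i)$, proved by rerunning the argument of Lemma~\ref{L:almost_char_compact} inside $\DQCOH(X)$, recovers the codomain $\tau^{\geq j}\RHom_{\DQCOH(X)}(\Psi(P_i),\Psi(M))$. The principal obstacle will be to carefully verify the naturality of this zig-zag so that it really represents $\tau^{\geq j}\epsilon_M$, and to set up the $\DQCOH(X)$-version of Lemma~\ref{L:almost_char_compact}.
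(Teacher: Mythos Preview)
Your proposal is correct and follows essentially the same route as the paper: part~\itemref{L:compacts-aff-diag:item:char} is identical, and for part~\itemref{L:compacts-aff-diag:item:gens} the paper establishes exactly your quasi-isomorphism $\RHom(P_i,M)\simeq\RHom(\Psi(P_i),\Psi(M))$ via the same truncation argument, invoking \cite[Lem.~4.5]{perfect_complexes_stacks} for the $\DQCOH$-side rather than rerunning Lemma~\ref{L:almost_char_compact}. For part~\itemref{L:compacts-aff-diag:item:fcd} the paper simply cites \cite[Lem.~4.4(3)]{perfect_complexes_stacks} in place of your explicit duality computation, but the content is the same.
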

\begin{proof}
  For \itemref{L:compacts-aff-diag:item:char}, by
  \cite[Lem.~4.5]{perfect_complexes_stacks}, since $\Psi(P)$ is
  compact, there exists an integer $r$ such that if $i>r$ and $N \in
  \QCOH(X)$, then $\Hom_{\Orb_X}(\Psi(P),N[i]) = 0$. The functor
  $\Psi^+$ is an equvialence (Theorem \ref{T:bdd_below}), so
  $\Hom_{\Orb_X}(P,N[i])=0$ for all $i>r$ and $N\in \QCOH(X)$.
  It follows that $P$ is compact by
  Lemma~\ref{L:almost_char_compact}.

  Statement \itemref{L:compacts-aff-diag:item:fcd} is a direct consequence
  of \itemref{L:compacts-aff-diag:item:char} and
  \cite[Lem.~4.4(3)]{perfect_complexes_stacks}.

  For \itemref{L:compacts-aff-diag:item:gens}, let
  $M \in \DCAT(\QCOH(X))$. If $P$ is perfect and $\Psi(P)$ is compact, then
  $\RHom(P,M)=\RHom(\Psi(P),\Psi(M))$.
  Indeed, there exists an integer $r$ such that for all integers $j$
  \begin{align*}
  \tau^{\geq j}\RHom(P,M) &\homotopic
  \tau^{\geq j}\RHom(P,\tau^{\geq j-r}M) \\
  &\homotopic
  \tau^{\geq j}\RHom(\Psi(P),\tau^{\geq j-r}\Psi(M))\homotopic
  \tau^{\geq j}\RHom(\Psi(P),\Psi(M)),
  \end{align*}
  by Lemma~\ref{L:almost_char_compact} and
  \cite[Lem.~4.5]{perfect_complexes_stacks} since $\Psi^+$ is an
  equivalence of triangulated categories~(Theorem \ref{T:bdd_below})
  and $\Psi$ is t-exact.
  Thus, if $\Hom_{\DCAT(\QCOH(X))}(P_i[l],M) = 0$ for all $i$ and integers $l$,
  then $\Hom_{\Orb_X}(\Psi(P_i)[l],\Psi(M)) = 0$ for all $i$ and $l$. It follows
  that $\Psi(M)=0$ and, since $\Psi$ is conservative, that $M=0$.
\end{proof}
The following lemma, while technical, gives an explicit description of an adjunction that is useful in the article.
\begin{lemma}\label{L:adjunction}
  Let $X$ be an algebraic stack and let $M \in \DCAT(\QCOH(X))$.
  \begin{enumerate}
  \item\label{L:adjunction:exists} The functor $\Psi_X \colon \DCAT(\QCOH(X)) \to 
    \DQCOH(X)$ admits a right adjoint $\Phi_X \colon \DQCOH(X) \to \DCAT(\QCOH(X))$. 
  \item\label{L:adjunction:aff_prop} If $X$ is a 
    quasi-compact with affine diagonal or noetherian and affine-pointed, then there exists a 
    compatible quasi-isomorphism:
    \[
    \Phi_X\Psi_X(M) \simeq \holim{n}\trunc{\geq -n}M.
    \]
  \end{enumerate}
\end{lemma}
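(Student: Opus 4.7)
The plan is to handle the two parts in sequence. For part \itemref{L:adjunction:exists}, I would appeal to Brown representability. Since $\QCOH(X)$ is Grothendieck abelian, $\DCAT(\QCOH(X))$ is a well generated triangulated category (Appendix \ref{APP:local-grothendieck} applied with $\cm = \ca = \QCOH(X)$). The functor $\Psi_X$ is induced by the inclusion $\QCOH(X) \hookrightarrow \MOD(X)$, which preserves small coproducts since $\QCOH(X)$ is closed under coproducts in $\MOD(X)$. Because coproducts in both derived categories are computed term-wise, $\Psi_X$ preserves small coproducts, and Neeman's Brown representability theorem then supplies the right adjoint $\Phi_X$.

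For part \itemref{L:adjunction:aff_prop}, my strategy is to verify the formula via the Yoneda lemma. The adjunction of part \itemref{L:adjunction:exists} gives
\[
\Hom_{\DCAT(\QCOH(X))}(N, \Phi_X\Psi_X M) = \Hom_{\DQCOH(X)}(\Psi_X N, \Psi_X M)
\]
for every $N \in \DCAT(\QCOH(X))$, and the goal is to identify this with $\Hom_{\DCAT(\QCOH(X))}(N, \holim{n}\trunc{\geq -n}M)$ functorially in $N$. The crucial input is a sharpening of Theorem~\ref{T:bdd_below}: for any $N \in \DCAT(\QCOH(X))$ and $L \in \DCAT^+(\QCOH(X))$, the natural map $\Hom(N, L) \to \Hom(\Psi_X N, \Psi_X L)$ is a bijection. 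I would prove this by picking $a$ with $L \in \DCAT^{\geq a}$; applying $\Hom(-, L)$ and $\Hom(\Psi_X(-), \Psi_X L)$ to the triangle $\trunc{\leq a-1}N \to N \to \trunc{\geq a}N$ and invoking t-structure orthogonality reduces both sides to $\Hom(\trunc{\geq a}N, L)$ and $\Hom(\trunc{\geq a}\Psi_X N, \Psi_X L)$ respectively. T-exactness of $\Psi_X$ identifies $\trunc{\geq a}\Psi_X N$ with $\Psi_X\trunc{\geq a}N$, and Theorem~\ref{T:bdd_below} then matches the two expressions, since $\trunc{\geq a}N$ and $L$ both lie in $\DCAT^+(\QCOH(X))$.

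With this extension in hand, I would invoke left-completeness of $\DQCOH(X)$ (Appendix \ref{APP:left-complete}) together with t-exactness of $\Psi_X$ to write $\Psi_X(M) \simeq \holim{n}\Psi_X(\trunc{\geq -n}M)$. Applying $\Hom(\Psi_X N, -)$ to the defining Milnor triangle of this homotopy limit in $\DQCOH(X)$, and applying $\Hom(N, -)$ to the analogous Milnor triangle for $\holim{n}\trunc{\geq -n}M$ in $\DCAT(\QCOH(X))$ (whose existence is guaranteed by the existence of countable products, and hence by well generation via \cite[Cor.~1.18]{MR1812507}), produces compatible short exact Milnor sequences for the two $\Hom$-groups in terms of $\lim$ and $\lim^1$ of the groups $\Hom(N,\trunc{\geq -n}M)$ and $\Hom(\Psi_X N, \Psi_X\trunc{\geq -n}M)$. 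The sharpening in the previous paragraph identifies these term-by-term, and Yoneda then yields the desired quasi-isomorphism, automatically functorial in $M$ and hence compatible with the canonical maps out of $M$.

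The principal obstacle is the extension of Theorem~\ref{T:bdd_below} to an arbitrary source $N$; once that is established, the rest is formal manipulation with Milnor sequences and the t-structure. The only other point to verify is that $\DCAT(\QCOH(X))$ admits countable products so that the homotopy limit makes sense, but this is covered by the well generation result of Appendix \ref{APP:local-grothendieck}.
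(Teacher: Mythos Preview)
Your proposal is correct and uses the same core ingredients as the paper: well generation of $\DCAT(\QCOH(X))$ plus Brown representability for part~\itemref{L:adjunction:exists}, and left-completeness of $\DQCOH(X)$, t-exactness of $\Psi_X$, and Theorem~\ref{T:bdd_below} for part~\itemref{L:adjunction:aff_prop}. The difference is in packaging. Where you argue via Yoneda and Milnor sequences, the paper simply observes that $\Phi_X$, being a right adjoint, preserves homotopy limits; combined with $\Psi_X(M)\simeq\holim{n}\trunc{\geq -n}\Psi_X(M)$ (left-completeness) and $\trunc{\geq -n}\Psi_X(M)=\Psi_X(\trunc{\geq -n}M)$ (t-exactness), this immediately gives $\Phi_X\Psi_X(M)\simeq\holim{n}\Phi_X\Psi_X(\trunc{\geq -n}M)$, and your Milnor-sequence computation is essentially re-proving this preservation property by hand in the case at hand. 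Conversely, your explicit ``sharpening'' of Theorem~\ref{T:bdd_below}---that the unit $L\to\Phi_X\Psi_X(L)$ is an isomorphism for every $L\in\DCAT^+(\QCOH(X))$, equivalently that $\Hom(N,L)\cong\Hom(\Psi_X N,\Psi_X L)$ for arbitrary $N$---is a point the paper invokes by citing Theorem~\ref{T:bdd_below} without further comment; your t-structure reduction to $N\in\DCAT^+$ makes this step precise.
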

\begin{proof}
  We suppress the subscript $X$ from $\Psi$ and $\Phi$ throughout. Since $\Psi$ preserves small coproducts and $\DCAT(\QCOH(X))$ is well generated 
  \cite[Thm.\ 0.2]{MR1874232}, $\Psi$ admits a right adjoint $\Phi \colon \DQCOH(X) 
  \to \DCAT(\QCOH(X))$ \cite[Prop.\ 1.20]{MR1812507}. This proves 
  \itemref{L:adjunction:exists}.
  
  To prove \itemref{L:adjunction:aff_prop}, by left-completeness of 
  $\DQCOH(X)$ (Theorem \ref{APP:T:left-complete}), 
  \[
  \Phi\Psi(M) \to \Phi(\holim{n}\trunc{\geq -n}\Psi(M))
  \]
  is a quasi-isomorphism. Since $\Phi$ is a right adjoint, it preserves homotopy limits. Also, 
  $\Psi$ is t-exact. Hence, there is a quasi-isomorphism
  \[
  \Phi(\holim{n}\trunc{\geq -n}\Psi(M)) \simeq \holim{n} \Phi\Psi(\trunc{\geq -n}M).
  \]
  By Theorem \ref{T:bdd_below}, however, $\trunc{\geq -n}M \simeq \Phi\Psi(\trunc{\geq 
    -n}M)$. This proves the claim.
\end{proof}
\begin{remark}\label{R:lc_ff}
  From Lemma \ref{L:adjunction}\itemref{L:adjunction:aff_prop} it is immediate that when 
  $X$ is quasi-compact with affine diagonal or 
  noetherian and affine-pointed, the left-completeness of $\DCAT(\QCOH(X))$ is equivalent 
  to $\Psi_X$ being fully faithful. 
\end{remark}
We now prove Theorem \ref{T:cpt-gen_Psi-equiv} using an argument similar to \cite[Lem.~4.5]{MR2846489}.
\begin{proof}[Proof of Theorem~\ref{T:cpt-gen_Psi-equiv}]
By Lemma~\ref{L:compacts-aff-diag}\itemref{L:compacts-aff-diag:item:gens},
both
$\DCAT(\QCOH(X))$ and $\DQCOH(X)$ are compactly generated and $\Psi$
takes a set of compact generators to a set of compact generators. In
particular, the right adjoint $\Phi \colon \DQCOH(X)\to
\DCAT(\QCOH(X))$ of $\Psi$ preserves small
coproducts~\cite[Thm.~5.1]{MR1308405}.

Consider the unit $\eta_M\colon M\to \Phi\Psi(M))$ and the counit
$\epsilon_M\colon \Psi\Phi(M)\to M$ of the adjunction.  Since $\Psi^+$
is an equivalence, we have that $\eta_P$ and $\epsilon_P$ are isomorphisms for
every compact object $P$. Since $\eta$ and $\epsilon$ are triangulated functors
that preserve small coproducts and $\DQCOH(X)$ and $\DCAT(\QCOH(X))$ are compactly
generated, it follows that $\eta$ and $\epsilon$ are equivalences. We conclude
that $\Psi$ is an equivalence.
\end{proof}

\section{The case of \texorpdfstring{$B_k\Ga$}{BGa} in positive characteristic}\label{S:BGa}
Throughout this section we let $k$ denote a field of
characteristic $p>0$. Let $B_k\Ga$ be the algebraic stack
classifying $\Ga$-torsors over $k$.  We remind ourselves that the
category of quasi-coherent sheaves on $B_k\Ga$ is the category of
$\Ga$-modules, which is equivalent to the category of locally small
modules over a certain ring $R$. In fact $R$ is the ring
\[
R=\frac{k[x_1^{},x_2^{},x_3^{},\ldots]}{(x_1^{p},x_2^{p},x_3^{p},\ldots)}
\]
and a module is locally small if every element is annihilated by all
but
finitely many $x_i$. Let us write $\DCAT(R^{\ls})$ for the derived category
of the category of locally small $R$-modules, and observe that
$\DCAT(R^{\ls})\cong\DCAT(\QCOH(B_k\Ga))$.
\begin{proposition}\label{P:BGa:nocompacts}
  The only compact objects, in either $\DCAT(\QCOH(B_k\Ga))$ or
  $\DQCOH(B_k\Ga)$, are the zero objects.
\end{proposition}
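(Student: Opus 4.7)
The plan is to directly exhibit, for any nonzero perfect complex $C$ on $B_k\Ga$, a countable coproduct over which $\Hom(C,-)$ fails to commute with the coproduct, thus contradicting compactness. By Corollary~\ref{C:BGa:compacts_image_coherent}, any compact object in either $\DCAT(\QCOH(B_k\Ga))$ or $\DQCOH(B_k\Ga)$ is perfect, i.e.\ quasi-isomorphic to a bounded complex of finite-dimensional $\Ga$-representations. Assuming $C\neq 0$, I would test compactness against the family $\{k[n]\}_{n\geq 1}$, where $k$ denotes the trivial representation. Since the complexes $k[n]$ have pairwise disjoint chain-complex support, the termwise direct sum $\bigoplus_{n\geq 1}k[n]$ coincides with the termwise product $\prod_{n\geq 1}k[n]$, so they represent the same object in each of $\DCAT(\QCOH(B_k\Ga))$ and $\DQCOH(B_k\Ga)$. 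Since $\Hom(C,-)$ always commutes with products, one obtains
\[
\Hom\Bigl(C,\bigoplus_{n\geq 1}k[n]\Bigr)=\prod_{n\geq 1}\Hom(C,k[n])=\prod_{n\geq 1}\Ext^n(C,k),
\]
and compactness of $C$ would force $\Ext^n(C,k)=0$ for all but finitely many $n$.

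Using that $C$ is perfect, $\Ext^n(C,k)=H^n(R\Gamma(B_k\Ga,C^\vee))=H^n(\Ga,C^\vee)$, where $C^\vee$ is the (nonzero) derived dual of $C$. This $\Ext$-computation is the same in both derived categories, since $C$ and $k$ are both bounded below and $\Psi^+$ is an equivalence (Theorem~\ref{T:bdd_below}). Via the truncation triangle $\tau^{<b}C^\vee\to C^\vee\to H^b(C^\vee)[-b]$ for the top nonzero cohomological degree $b$ of $C^\vee$, and induction on the amplitude of $C^\vee$, the problem reduces to the single-module statement: for any nonzero finite-dimensional $\Ga$-representation $V$, $H^i(\Ga,V)\neq 0$ for infinitely many $i$. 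The base case $V=k$ is classical: $H^*(\Ga,k)$ is nonzero in every positive degree, being a polynomial (for $p=2$) or polynomial-exterior (for $p$ odd) algebra on countably many generators of low degree. For general nonzero $V$, since $\Ga$ is unipotent, $V$ admits a nonzero invariant $v\in V^\Ga$, and the induced morphism of graded $H^*(\Ga,k)$-modules $H^*(\Ga,k)\to H^*(\Ga,V)$, together with a devissage along a trivial-factor filtration of $V$, should propagate the unbounded support.

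The principal obstacle is precisely this last devissage. The long exact sequence for $0\to k\to V\to V/k\to 0$ in a putative vanishing range for $H^*(\Ga,V)$ yields only consistency identities such as $\dim H^{i-1}(\Ga,V/k)=\dim H^i(\Ga,k)$, not an immediate contradiction. To close the argument, I would exploit the $H^*(\Ga,k)$-module structure on $H^*(\Ga,V)$ more carefully, showing that the annihilator in $H^*(\Ga,k)$ of any nonzero invariant $v\in V^\Ga$ cannot omit all sufficiently high degrees. Alternatively, one can invoke Hochschild--Serre for the extension $1\to\alpha_p\to\Ga\xrightarrow{F}\Ga\to 1$ to reduce the unbounded cohomology of $\Ga$ to that of the finite group scheme $\alpha_p$, where the Friedlander--Suslin finiteness theorem immediately supplies infinite support for $H^*(\alpha_p,V)$ whenever $V\neq 0$.
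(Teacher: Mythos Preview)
Your overall strategy is natural and close in spirit to the paper's, but there are two genuine gaps.

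\textbf{The coproduct-versus-product step.} The reasoning ``the $k[n]$ have disjoint chain-complex support, so their termwise sum equals their termwise product, hence $\bigoplus k[n]\simeq\prod k[n]$ in the derived category'' conflates the termwise product of complexes with the categorical product. Coproducts in a derived category are computed termwise, but products require K-injective replacement. In $\DQCOH(B_k\Ga)$ your conclusion does hold, though the correct justification is left-completeness (Theorem~\ref{APP:T:left-complete}), which gives $\bigoplus k[n]\simeq\holim_m\tau^{\geq -m}\bigl(\bigoplus k[n]\bigr)\simeq\prod k[n]$. In $\DCAT(\QCOH(B_k\Ga))$, by contrast, products in the abelian category $\QCOH(B_k\Ga)$ are \emph{not} exact (for instance, the surjections $k[x_n]/(x_n^p)\twoheadrightarrow k$ fail to remain surjective after taking the locally-small product over all $n$), and $\DCAT(\QCOH(B_k\Ga))$ is not left-complete---indeed, this failure is precisely what is exploited in the proof of Theorem~\ref{T:notfull}. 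So for $\DCAT(\QCOH(B_k\Ga))$ your computation of $\Hom(C,\bigoplus k[n])$ is unjustified and likely false as stated.

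\textbf{The cohomological input.} You correctly flag the devissage from $H^*(\Ga,C^\vee)$ to $H^*(\Ga,V)$ for a single nonzero $V$ as the ``principal obstacle'', and you do not close it. The long exact sequence permits cancellation, and neither the $H^*(\Ga,k)$-module argument nor the Hochschild--Serre/Friedlander--Suslin sketch is carried out.

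\textbf{How the paper dissolves both gaps at once.} The paper observes that a bounded complex $C$ of coherent sheaves on $B_k\Ga$ is annihilated by all but finitely many of the variables $x_i$, so $C=\gamma_*B$ for some $B\in\DCAT^b(T)$ with $T=k[x_1,\dots,x_n]/(x_i^p)$. Since $\MOD(T)$ has exact products, $\DCAT(T)$ is left-complete and the coproduct/product identification for disjoint-degree families is valid \emph{there}. A further restriction along $\alpha\colon S=k[x_n]/(x_n^p)\hookrightarrow T$ turns $\alpha_*B$ into a sum of shifts of $k$ (it is annihilated by $x_n$), so the only cohomological input needed is the elementary fact $\Ext^m_S(k,k)\neq 0$ for all $m\geq 0$. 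The resulting map $B\to\coprod_m\Hom_S(T,k)[m]$ with infinitely many nonzero components is then pushed forward via $\gamma_*$, which preserves coproducts; the components stay nonzero because $\beta_*\gamma_*=\id$. Thus a single reduction to an honest module category over an Artinian ring removes both of your obstacles simultaneously.
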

\begin{proof} The algebraic stack $B_k\Ga$ is noetherian with affine diagonal
and so, by Corollary
\ref{C:BGa:compacts_image_coherent}, every compact object is the image
of a bounded complex of coherent sheaves. Let $C$ be a compact object;
we need to show that $C$ vanishes.

 Our compact
object 
$C$ is the image of a finite complex of finitely generated modules
in $\DCAT(R^{\ls})$. In particular,
there exists an integer $n>1$
such that $x_i^{}$ annihilates $C$ for all $i\geq
n$. Let us put this slightly differently: consider the ring homomorphisms
$S\stackrel\alpha \to T\stackrel\beta \to R\stackrel\gamma \to T$ where 
\[
S=k[x_n^{}]/(x_n^p),\qquad T=\frac{k[x_1^{},x_2^{},\ldots,x_{n-1}^{},x_{n}^{}]}{(x_1^{p},x_2^{p},\ldots,x_{n-1}^{p},x_{n}^{p})}
\]
where the maps $S\stackrel\alpha \to T\stackrel\beta \to R$ are the natural 
inclusions, and where 
$\gamma\colon R \to T$ is defined by
\[
\gamma(x_i)=\left\{
\begin{array}{lcl}
x_i&\quad&\text{if }i\leq n\\
0&\quad&\text{if }i> n.
\end{array}
\right.
\]  
Note that $\gamma\beta=\id$.
Restriction of scalars gives induced maps of derived categories, which we
write as $\DCAT(T)\xrightarrow{\gamma_*} 
\DCAT(R^{\ls})\xrightarrow{\beta_*} \DCAT(T)
\xrightarrow{\alpha_*}\DCAT(S)$, 
and $\beta_*^{}\gamma_*^{}=\id$. Our complex $C$, which is a bounded complex
annihilated by $x_i$ for all $i\geq n$, is of the form $\gamma_*^{}B$ where
$B\in\DCAT^b(T)$ is a bounded complex of finite $T$-modules. And the 
fact that $x_n$ annihilates $C$ translates to saying that 
 $\alpha_*^{}B$ is a complex of modules annihilated by $x_n$, that is a
complex of $k$-vector spaces. We wish to show that $C=0$ or, equivalently,
that $\alpha_*^{}B$ is acyclic. We will show that if $C$ is non-zero, then
this gives rise to a contradiction.

Thus, assume that the
cohomology of $\alpha_*^{}B$ is non-trivial: in $\DCAT(S)$ the complex 
$\alpha_*^{}B$ is
isomorphic
to a non-zero sum of suspensions $k[\ell]$ of $k$.
Then there are infinitely many integers $m$
and non-zero maps in $\DCAT(S)$ of the form $\alpha_*^{}B \to k[m]$. Indeed,
$\Ext^m_S(k,k)\neq0$ for all $m\geq0$. But $\alpha_*^{}$ has a right
adjoint $\alpha^\times=\RHom_S(T,-)$, and we deduce 
infinitely many non-zero maps in $\DCAT(T)$ of the form
$B \to \alpha^\times k[m]=\Hom_S^{}(T,k)[m]$. Since $\DCAT(T)$ is
left-complete, these combine to a map in $\DCAT(T)$
\[
B \xrightarrow{\Psi} \prod_{m}\Hom_S^{}(T,k)[m]\cong \coprod_{m}\Hom_S^{}(T,k)[m]
\]
for which the composites
\[
B \xrightarrow{\Psi} \coprod_{m}\Hom_S^{}(T,k)[m] \xrightarrow{\pi_m} \Hom_S^{}(T,k)[m]
\]
are non-zero.
Applying $\gamma_*^{}$, which preserves coproducts, we deduce maps
\[
\gamma_*^{}B \xrightarrow{\gamma_*\Psi}
\coprod_{m}\gamma_*^{}\Hom_S^{}(T,k)[m] \xrightarrow{\gamma_*\pi_m}  
\gamma_*^{}\Hom_S^{}(T,k)[m]
\]
whose composites cannot vanish in $\DCAT(R^{\ls})$, since $\beta_*^{}$
takes them to non-zero maps. The equivalence
$\DCAT(R^{\ls})\cong\DCAT(\QCOH(B_k\Ga))$ gives us that the composites
in $\DCAT(\QCOH(B_k\Ga))$ do not vanish. Furthermore, the composites lie in
$\DCAT^+(\QCOH(B_k\Ga))\subseteq\DCAT(\QCOH(B_k\Ga))$, and on
$\DCAT^+(\QCOH(B_k\Ga))$ the map to $\DQCOH(B_k\Ga)$ is fully faithful
\cite[Thm.~3.8]{lurie_tannaka}. Hence the images of the composites are
non-zero in $\DQCOH(B_k\Ga)$ as well.  But this contradicts the
compactness of $C=\gamma_*^{}B$. 
\end{proof}
\section{The general case}\label{S:gen_case}
In this section we extend the results of the previous section and show that the
presence of $\Ga$ in the stabilizer groups of an algebraic stack $X$ is an
obstruction to compact generation in positive characteristic. The existence of
finite unipotent subgroups such as $\zz/p\zz$ and $\Galpha_p$ is an obstruction
to the compactness of the structure sheaf $\mathcal{O}_X$ but does not rule out compact
generation \cite{hallj_dary_alg_groups_classifying}. The only connected groups in characteristic $p$ without unipotent subgroups are the groups of
multiplicative type. The following well-known lemma characterizes the groups
without $\Ga$'s.
\begin{lemma}
Let $G$ be a group scheme of finite type over an algebraically closed field $k$.
Then the following are equivalent:
\begin{enumerate}
\item $G^0_{\mathrm{red}}$ is semiabelian, that is, a torus or the extension of
  an abelian variety by a torus;
\item there is no subgroup $\Ga\hookrightarrow G$.
\end{enumerate}
\end{lemma}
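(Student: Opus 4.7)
The plan is to prove both implications after reducing to the case where $G$ is smooth and connected. Since $\Ga$ is smooth and connected, any closed subgroup embedding $\Ga \hookrightarrow G$ factors through the identity component $G^0$ and, as $\Ga$ is reduced, through $G^0_{\mathrm{red}}$; because $k$ is algebraically closed (hence perfect) $G^0_{\mathrm{red}}$ is smooth. It therefore suffices to prove that a smooth connected algebraic group $H$ is semiabelian if and only if it contains no copy of $\Ga$.

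For (1)$\Rightarrow$(2), I would use the defining short exact sequence $1 \to T \to H \to A \to 1$ with $T$ a torus and $A$ an abelian variety. Any morphism $\Ga \to A$ is constant, since $\Ga$ is affine and $A$ is proper, so every homomorphism $\Ga \to H$ factors through $T$. A character $\Ga \to \Gm$ corresponds to a unit in $k[x]$, hence is constant; therefore $\Hom(\Ga, T) = 0$ and no embedding $\Ga \hookrightarrow H$ exists.

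For (2)$\Rightarrow$(1), the plan is to invoke Chevalley's structure theorem to write $1 \to L \to H \to A \to 1$ with $L$ a smooth connected linear algebraic group and $A$ an abelian variety, so that $H$ is semiabelian precisely when $L$ is a torus. Assuming $L$ is not a torus, I would produce $\Ga \hookrightarrow L$ by choosing a Borel subgroup $B \subseteq L$ with unipotent radical $U = R_u(B)$: if $U$ were trivial then $B$ would be a torus, forcing $L$ to be reductive with torus Borel and hence itself a torus, contrary to assumption. Thus $U$ is a nontrivial smooth connected unipotent group over the algebraically closed field $k$.

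The main obstacle is the last step: producing a closed embedding $\Ga \hookrightarrow U$. I would appeal to the classical structure theory of smooth connected unipotent groups over a perfect field, which guarantees that such a $U$ admits a composition series whose successive subquotients are isomorphic to $\Ga$; the bottom term of this series then supplies the desired closed subgroup $\Ga \hookrightarrow U \subseteq L \subseteq H$, completing the argument.
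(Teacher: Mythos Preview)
Your proof is correct. The reduction to $H=G^0_{\mathrm{red}}$ and the argument for $(1)\Rightarrow(2)$ match the paper's: any $\Ga\to H$ composed with $H\to A$ is constant (affine source, proper target), hence lands in the torus, and $\Hom(\Ga,\Gm)=0$.

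For $(2)\Rightarrow(1)$ the two proofs diverge. The paper invokes the Jordan decomposition to show that if the affine kernel $H$ in Chevalley's extension is not a torus then $H(k)$ contains a nontrivial unipotent element, and then asserts that any such element lies in a subgroup $\Ga\hookrightarrow G$. You instead pass to a Borel $B\subseteq L$, argue that its unipotent radical $U$ is nontrivial (else $L$ is reductive with Borel a torus, forcing $L$ itself to be a torus), and then extract $\Ga$ as the bottom term of a composition series for the smooth connected unipotent group $U$. Your route is in fact the more careful one: the paper's assertion that \emph{every} nontrivial unipotent element lies in a copy of $\Ga$ is false in positive characteristic---an element of order $p^2$ in the length-two Witt group $W_2$ cannot sit inside any $\Ga$, since every $k$-point of $\Ga$ has order dividing $p$. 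What is true, and what suffices for the lemma, is the weaker statement that the presence of a nontrivial unipotent element forces the existence of \emph{some} subgroup isomorphic to $\Ga$; your Borel/composition-series argument establishes exactly this and sidesteps the issue.
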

\begin{proof}
  By Chevalley's Theorem \cite[Thm.~1.1]{MR1906417} there is an extension $1\to H\to G^0_{\mathrm{red}}\to
  A\to 1$ where $H$ is smooth, affine and connected and $A$ is an abelian
  variety. A subgroup $\Ga\hookrightarrow G$ would have to be contained in $H$
  which implies that $H$ is not a torus. Conversely, recall that $H(k)$ is generated by its
  semi-simple and unipotent elements by the Jordan Decomposition
  Theorem \cite[Thm.~4.4]{MR1102012}. If
  $H$ is not a torus, then there exist non-trivial unipotent elements in
  $H(k)$. But any non-trivial unipotent element of $H(k)$ lies in a subgroup
  $\Ga\hookrightarrow G$. The result follows.
\end{proof}

If $k$ is of positive characteristic, then we say that $G$ is {\badgroup}
if $G^0_{\mathrm{red}}$ is not semiabelian. We say that an algebraic stack $X$ is \emph{\badstack} if
there exists a geometric point $x$ of $X$ whose residue field $\kappa(x)$
is of characteristic $p>0$ and stabilizer group scheme $G_x$ is {\badgroup}.
In particular, the algebraic stacks $B_k\Ga$ and
$B_k\mathrm{GL}_n$ for $n>1$ are {\badstack} in positive
characteristic. The following characterization of {\badstack}
algebraic stacks will be useful.
\begin{lemma}\label{L:badgroup}
Let $X$ be a quasi-separated algebraic stack.
\begin{enumerate}
\item The stack $X$ is {\badstack} if and only if there exists a field $k$ of
  characteristic $p>0$ and a representable morphism $\phi\colon B_k\Ga \to X$.
\item If $X$ has affine stabilizers, then every representable morphism
  $\phi\colon B_k\Ga \to X$ is quasi-affine.
\end{enumerate}
\end{lemma}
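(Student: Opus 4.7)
For part~(1), the plan is to transport between group-theoretic data (a monomorphism $\Ga \hookrightarrow G_{\bar x}$) and stack-theoretic data (a representable morphism $B_k\Ga \to X$) via the classifying-stack formalism. In the reverse direction, given a representable $\phi\colon B_k\Ga \to X$, composing the tautological atlas $\spec k \to B_k\Ga$ with $\phi$ yields a point $x \in |X|$ of residue characteristic $p$; representability of $\phi$ is equivalent to the induced map on relative inertia being a monomorphism, which at the tautological point gives a closed immersion of group schemes $\Ga \hookrightarrow G_x$. After base-change to $\overline{\kappa(x)}$, the preceding lemma shows that the geometric stabilizer is {\badgroup}, so $X$ is {\badstack}. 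In the forward direction, pick a geometric point $\bar x\colon \spec k \to X$ with $k$ algebraically closed of characteristic $p$ and $G_{\bar x}$ {\badgroup}; the preceding lemma furnishes a closed immersion $\Ga \hookrightarrow G_{\bar x}$, inducing a representable $B_k\Ga \to B_k G_{\bar x}$ (representability follows because the subgroup inclusion is a monomorphism). The groupoid $G_{\bar x}\rightrightarrows \spec k$, with both arrows equal to $\bar x$, admits a canonical 2-compatible map to $X$, producing the representable morphism $B_k G_{\bar x} \to X$; composition yields the desired representable $B_k\Ga \to X$.

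For part~(2), the plan uses the factorization $B_k\Ga \to B_k G_x \to X$ constructed in (1), after passing to $\overline{\kappa(x)}$ if needed, together with the observability of unipotent subgroups in affine algebraic groups. The base change of $B_k\Ga \to B_k G_x$ along the atlas $\spec k \to B_k G_x$ is the homogeneous space $G_x/\Ga$. Since $\Ga$ is a unipotent subgroup of the affine algebraic group $G_x$, it is observable---a classical result in the theory of algebraic groups, see, e.g., Grosshans or Humphreys---so $G_x/\Ga$ is quasi-affine; by smooth descent, $B_k\Ga \to B_k G_x$ is quasi-affine. Under the affine-stabilizer hypothesis, the morphism $B_k G_x \to X$ is itself quasi-affine: pulling back along a scheme morphism $T \to X$ produces $(T \times_X \spec k)/G_x$ as an algebraic space, and descent along the $G_x$-torsor $T \times_X \spec k \to (T \times_X \spec k)/G_x$ transfers quasi-affineness from the total space (which is built from $G_x$-translates of a quasi-affine piece) to the quotient. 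Composition of quasi-affine morphisms is quasi-affine, completing the proof.

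The main obstacle is establishing that $B_k G_x \to X$ is quasi-affine from the affine-stabilizer hypothesis alone, rather than the stronger affine-diagonal or affine-pointedness assumption. This rests on a careful analysis of the groupoid $G_x \rightrightarrows \spec k$ inside $X$ and the descent properties of quasi-affine morphisms along torsors under an affine group. The secondary input---classical observability of unipotent subgroups in affine algebraic groups---is standard but nontrivial. With these two facts in hand, the remainder of the argument is routine bookkeeping with 2-fiber products and smooth descent.
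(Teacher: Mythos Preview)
Your treatment of part~(1) is correct and essentially identical to the paper's argument: both directions go through the factorization $B_k\Ga \to B_kG \to X$ and the correspondence between subgroup inclusions and representable morphisms of classifying stacks.

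For part~(2), you have correctly identified the decomposition $B_k\Ga \to B_kG_x \to X$ and the fact that the first map is quasi-affine because $G_x/\Ga$ is quasi-affine (observability of unipotent subgroups; the paper cites Rosenlicht for the same fact). The gap is exactly where you say it is: your argument that $B_kG_x \to X$ is quasi-affine is not a proof. Your descent along the $G_x$-torsor $T\times_X \spec k \to (T\times_X \spec k)/G_x$ is fine, but it only reduces the question to showing that $\spec k \to X$ is quasi-affine, i.e., that $T\times_X \spec k \to T$ is quasi-affine. The phrase ``built from $G_x$-translates of a quasi-affine piece'' does not establish this, and no amount of groupoid bookkeeping inside $X$ will produce it from scratch; this is a genuine structural fact about quasi-separated stacks.

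The paper resolves this by inserting the residual gerbe into the factorization: $\phi$ factors as $B_k\Ga \xrightarrow{\psi} B_kG \xrightarrow{\rho} \mathcal{G}_x \xrightarrow{\iota_x} X$. The key input, which replaces your missing step, is Rydh's theorem that for any point $x$ of a quasi-separated algebraic stack, the inclusion $\iota_x\colon \mathcal{G}_x \hookrightarrow X$ of the residual gerbe is quasi-affine. The map $\rho\colon B_kG \to \mathcal{G}_x$ is then affine (a base change of the field extension $\spec k \to \spec \kappa(x)$, since after pulling back $\mathcal{G}_x$ to $k$ one obtains $B_kG$). Combined with your $\psi$ being quasi-affine, this gives the result. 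So your outline is right, but the decisive ingredient---the quasi-affineness of the residual gerbe inclusion---is a nontrivial cited theorem, not something recoverable from the descent sketch you propose.
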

\begin{proof}
  Let $k$ be an algebraically closed field and let $x\colon \spec k\to X$ be a
  geometric point with stabilizer group scheme $G$. This induces a
  representable morphism $BG\to X$. If $X$ is {\badstack}, then there exists a
  point $x$ such that $G^0_{\mathrm{red}}$ is not semiabelian.  By the previous
  lemma, there is a subgroup $\Ga\hookrightarrow G$ and hence a representable
  morphism $B\Ga\to BG$.

  Conversely, given a representable morphism $\phi\colon B_k\Ga \to X$, there
  is an induced representable morphism $\psi\colon B_k\Ga \to B_kG$. The
  morphism $\psi$ is induced by some subgroup $\Ga\hookrightarrow G$ (unique up
  to conjugation) so $X$ is {\badstack}.

  The structure
  morphism $\iota_x\colon \mathcal{G}_x\hookrightarrow X$ of the residual gerbe
  $\mathcal{G}_x$ at $x$ is quasi-affine ~\cite[Thm.~B.2]{MR2774654} and
  $\phi=\iota_x\circ\rho\circ\psi$ where $\rho\colon B_kG\to \mathcal{G}_x$ is
  affine. If $X$ has affine stabilizers, then $G$ is affine and it follows that
  the quotient $G/\Ga$ is quasi-affine since $\Ga$ is unipotent \cite[Thm.~3]{MR0130878}.
  We conclude that the morphism $\psi\colon B_k\Ga
  \to B_kG$, as well as $\phi$, is quasi-affine.
\end{proof}
We now prove Theorems~\ref{T:compact-generation} and~\ref{T:notfull}.
\begin{proof}[Proof of Theorem \ref{T:compact-generation}]
  By Lemma \ref{L:badgroup}, there exists a
  field of characteristic $p>0$ and a quasi-compact, quasi-separated and representable morphism
  $\phi\colon B_k\Ga \to X$. 

  If $\DQCOH(X)$ is compactly generated, then there is a
  compact object $M \in \DQCOH(X)$ and a non-zero map $M \to
  \RDERF \MODPSH{\phi}\mathcal{O}_{B_k\Ga}$. Indeed, 
  $\RDERF \MODPSH{\phi}\mathcal{O}_{B_k\Ga} \in \DQCOH(X)$ and is
  non-zero.  By adjunction, there is a
  non-zero map $\LDERF \phi^*M \to \mathcal{O}_{B_k\Ga}$. But the
  functor $\LDERF \phi^*$ sends compact objects of $\DQCOH(X)$ to
  compact objects of $\DQCOH(B_k\Ga)$ \cite[Ex.~3.8 \&
  Thm.~2.6(3)]{perfect_complexes_stacks}. By Proposition
  \ref{P:BGa:nocompacts}, it follows that $\LDERF \phi^*M \simeq 0$
  and we have a contradiction. Hence $\DQCOH(X)$ is not compactly
  generated.

  If $X$ has affine diagonal or is noetherian and affine-pointed, and $\DCAT(\QCOH(X))$ is compactly generated, then there is a compact object $Q \in \DCAT(\QCOH(X))$ and a non-zero map $Q \to \RDERF\QCOHPSH{\phi}\Orb_{B_k\Ga}$. Indeed, 
  $\RDERF \QCOHPSH{\phi}\mathcal{O}_{B_k\Ga} \in \DCAT(\QCOH(X))$ and is
  non-zero. Since $\QCOHPSH{\phi}$ preserves small products, $\RDERF \QCOHPSH{\phi}$ preserves small products. Furthermore, $\DCAT(\QCOH(B_k\Ga))$ is well-generated and so $\RDERF \QCOHPSH{\phi}$ admits a left adjoint $F \colon \DCAT(\QCOH(X)) \to \DCAT(\QCOH(B_k\Ga))$. By Corollary \ref{C:dsum}\eqref{CI:dsum:coprod} and \cite[Ex.~3.8]{perfect_complexes_stacks}, the
  functor $F$ preserves compact objects. Hence, Proposition \ref{P:BGa:nocompacts} implies that $F(Q) \simeq 0$. We have a contradiction and $\DCAT(\QCOH(X))$ is not compactly generated. 
\end{proof}
\begin{proof}[Proof of Theorem \ref{T:notfull}]
  By Lemma \ref{L:badgroup}, there exists a
  field of characteristic $p>0$ and a quasi-affine morphism
  $\phi\colon B_k\Ga \to X$. By Corollary \ref{C:dsum}, there exists an
  integer $n\geq 1$ such that if $N \in \QCOH(B_k\Ga)$, then
  $\RDERF \QCOHPSH{\phi}N \in \DCAT^{[0,n-1]}(\QCOH(X))$. By
  \cite[Thm.~1.1]{MR2875857}, there exists $M \in \QCOH(B_k\Ga)$ such
  that the natural map in $\DCAT(\QCOH(B_k\Ga))$:
  \[
  \bigoplus_{i\geq 0} M[in] \to \prod_{i\geq 0} M[in]
  \]
  is not a quasi-isomorphism---note that while
  \cite[Thm.~1.1]{MR2875857} only proves the above assertion in the
  case where $n=1$, a simple argument by induction on $n$ gives the
  claim above. Corollary \ref{C:dsum}\eqref{CI:dsum:qaff-cons}
  now implies that the
  natural map:
  \[
  \bigoplus_{i\geq 0} \RDERF \QCOHPSH{\phi}M[in] \to \prod_{i\geq
    0} \RDERF \QCOHPSH{\phi}M[in]
  \]
  is not a quasi-isomorphism. Since $\RDERF \QCOHPSH{\phi}M \in
  \DCAT^{[0,n-1]}(\QCOH(X))$, it follows that $\DCAT(\QCOH(X))$ is not
  left-complete. By Remark \ref{R:lc_ff}, we have established that $\Psi_X$ is not fully 
  faithful. To prove that $\Psi_X$ is not full, we will have to argue further.

  Let $L =
  \RDERF \QCOHPSH{\phi}M$, $S = \oplus_{i\geq 0} L[in]$, and
  $P=\prod_{i\geq 0}L[in]$. Also, $\Phi_X\Psi_X(S) \simeq P$ 
  (Lemma \ref{L:adjunction}\itemref{L:adjunction:aff_prop}). If $\Psi_X$ is full,
  then there exists a map $P \to S$ such that the induced map $P \to S\to \Phi_X \Psi_X(S) 
  \simeq P$ is the identity morphism. That is, $P$ is a direct summand of $S$. Since $\prod_{i\geq 0} M[in]$ is
  not bounded above \cite[Rem.~1.2]{MR2875857} and $\phi$ is
  quasi-affine, it follows that $P$ is not bounded above. But $S$ is bounded above, so $P$ 
  cannot be a direct summand of $S$; hence, we have a contradiction and $\Psi_X$ is not full.
\end{proof}
\appendix
\section{\texorpdfstring{$\DCAT_{\cm}(\ca)$}{D(A)} is well
  generated}\label{APP:local-grothendieck}

We begin with a general lemma.

\begin{lemma}\label{A.L10.1}
Let $\ct$ be a well generated triangulated category and let
$\cs\subset\ct$ be a localizing subcategory. The category $\cs$ is
well  
generated if and only if there is a set of generators in $\cs$. That
is: $\cs$ is well generated if and only if 
there is a set of objects $S\subset\cs$ such that any nonzero object
$y\in\cs$ admits a nonzero map $s\to y$, with $s\in S$.
\end{lemma}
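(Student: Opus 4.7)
The ``only if'' direction is immediate: the definition of a well generated triangulated category requires the existence of a set of $\alpha$-compact generators for some regular cardinal $\alpha$, which in particular constitutes a set of generators in the weak sense of the lemma.

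For the ``if'' direction, the plan is to upgrade the given set $S$ to a set of $\alpha$-compact generators of $\cs$ for a suitable regular cardinal $\alpha$. I would begin by fixing a regular cardinal $\beta$ witnessing the well generation of $\ct$, so that $\ct$ carries a set of $\beta$-compact generators. A standard fact about well generated triangulated categories (due to Krause and Neeman) is the decomposition $\ct = \bigcup_{\gamma \geq \beta} \ct^{\gamma}$, where $\ct^{\gamma}$ denotes the full subcategory of $\gamma$-compact objects. In particular each $s \in S$ lies in $\ct^{\gamma_s}$ for some regular $\gamma_s \geq \beta$. After enlarging $S$ to be closed under suspension (still a set, still generating $\cs$ in the sense of the lemma), I would pick a single regular cardinal $\alpha$ strictly greater than every $\gamma_s$, so that $S \subset \ct^{\alpha}$.

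The next step is to transfer $\alpha$-compactness from $\ct$ to $\cs$. Since $\cs$ is a localizing subcategory, coproducts computed in $\cs$ agree with those computed in $\ct$. Neeman's axioms defining the $\alpha$-compact objects are phrased purely in terms of factorizations of maps into coproducts through sub-coproducts of cardinality strictly less than $\alpha$, so the property transfers verbatim: each $s \in S$ remains $\alpha$-compact when regarded as an object of $\cs$. Combined with the generation hypothesis on $S$ (which, after closing under suspensions, matches Neeman's axiom (G1) for $\cs$), this exhibits $S$ as a set of $\alpha$-compact generators of $\cs$, proving that $\cs$ is well generated.

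The main obstacle---really the only substantive content of the argument---is the cardinal bookkeeping: verifying that the several axioms constituting well-generation (smallness of the generators and a solution-set-type condition, not merely the existence of a naive generating set) all descend from $\ct$ to $\cs$ once a single sufficiently large regular cardinal $\alpha$ has been chosen uniformly for $S$. That this descent is automatic is a direct consequence of the fact that the inclusion $\cs \subset \ct$ preserves (and reflects) small coproducts, which is built into the definition of a localizing subcategory.
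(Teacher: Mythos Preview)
Your approach differs from the paper's, and there is a gap at the crucial step.

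You attempt to verify directly that $S$ (closed under suspension, with $\alpha$ chosen large) is a set of $\alpha$-compact generators for $\cs$. The transfer of $\alpha$-\emph{smallness} from $\ct$ to $\cs$ is indeed automatic, since coproducts in $\cs$ are computed in $\ct$. But well generation requires more: in Krause's formulation one also needs the perfectness condition (G3), namely that for any family of maps $\{f_i\}$, if $\Hom(s,f_i)$ is surjective for every $s\in S$ and every $i$, then $\Hom(s,\coprod_i f_i)$ is surjective for every $s\in S$. This is a property of the \emph{set} $S$, not of its individual members, and it is not inherited by passing to subsets: knowing (G3) for $\ct^\alpha$ in $\ct$ does not give (G3) for an arbitrary subset $S\subset\ct^\alpha$, because the hypothesis of (G3) for $S$ is strictly weaker than for $\ct^\alpha$. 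Your description of $\alpha$-compact objects as those whose maps into coproducts factor through sub-coproducts of size $<\alpha$ is in fact the definition of $\alpha$-\emph{small}; Neeman's $\ct^\alpha$ is more restrictive, and the difference is exactly what is at stake. Coproduct-preservation by the inclusion $\cs\subset\ct$ does not close this gap.

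The paper avoids the issue entirely. Rather than verifying the axioms for $S$ in $\cs$ directly, it invokes Neeman's Theorem~4.4.9, which asserts that the localizing subcategory $\cl=\Loc(S)$ generated by $S$ inside $\ct$ is well generated---this theorem packages precisely the (G3)-type verification you are missing. With $\cl$ known to be well generated, Brown representability produces, for each $y\in\cs$, a Bousfield localization triangle $x\to y\to z$ with $x\in\cl$ and $z\in\cl^\perp\subset S^\perp$; the weak generation hypothesis then forces $z=0$, so $y\cong x\in\cl$ and hence $\cl=\cs$.
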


\begin{proof}
If $\cs$ is well generated then it has a set of generators $S$
satisfying a bunch of properties, one of which is that $S$ detects
nonzero objects---see the definitions in~\cite[pp.~273-274]{MR1812507}. What needs proof is the reverse implication.

Suppose therefore that $\cs$ contains a set of objects $S$ as in the
Lemma, that is every nonzero object $y\in\cs$ admits a nonzero map 
$s\to y$ with $s\in S$. By
\cite[Prop.~8.4.2]{MR1812507} the set $S$
is contained in $\ct^\alpha$ for some regular cardinal $\alpha$.
If $\cl=\Loc(S)$ is the localizing subcategory
generated by $S$ then \cite[Thm.~4.4.9]{MR1812507}
informs us that $\cl$ is well generated. Since $S
\subset\cs$ and $\cs$ is localizing it follows that
$\cl\subset\cs$.

We know that $\cl$ is well generated; to finish the proof
it suffices to show that the inclusion $\cl\subset\cs$
is an equality. In any case the inclusion is a coproduct-preserving 
functor from the well generated category $\cl$ and must have a right adjoint.
For every object $y\in \cs$, \cite[Prop.~9.1.8]{MR1812507}  tells us
that
there is a triangle
in $\cs$
\[
\xymatrix{
x \ar[r] & y \ar[r] & z\ar[r] & \T x
}
\]
with $x\in\cl$ and $z\in\cl^\perp$. Since $z\in\cl^\perp\subset S^\perp$ we have that 
the morphisms $s\to z$, with $s\in S$, all vanish. By the hypothesis
of the Lemma it follows that $z=0$, and hence $y\cong x$ belongs to $\cl$.
\end{proof}

\begin{remark}\label{A.R10.2}
We specialize Lemma~\ref{A.L10.1}
to the situation where $\ct=\DCAT(\ca)$ is the
derived category of a Grothendieck abelian category $\ca$; by
\cite[Thm.~0.2]{MR1874232} we know that 
$\ct$ is well generated, and Lemma~\ref{A.L10.1} informs us that
a localizing subcategory of $\DCAT(\ca)$ is well generated if and 
only if it has a set of generators.
\end{remark}

Let $\ca$ be an abelian category and fix a fully faithful subcategory $\cc \subseteq \ca$. Following 
\cite[Tag \spref{02MO}]{stacks-project} we say that
\begin{enumerate}
\item $\cc$ is a \fndefn{Serre} subcategory if it is non-empty and if
  $C_1 \to A \to C_2$ is an exact sequence in $\ca$ with
  $C_1, C_2 \in \cc$, then $A \in \cc$;
\item $\cc$ is a \fndefn{weak Serre} subcategory if it is non-empty and if
  \[ 
  \xymatrix{C_1 \ar[r] & C_2 \ar[r] & A \ar[r] & C_3 \ar[r] & C_4,}
  \]
  is an exact sequence in $\ca$, where the $C_i\in \cc$ and $A\in \ca$, then $A \in \cc$.
\end{enumerate}
Clearly, Serre subcategories are weak Serre subcategories. Also, weak Serre subcategories are automatically abelian and the inclusion $\cc \subseteq \ca$ is exact \cite[Tags \spref{02MP} \& \spref{0754}]{stacks-project}. Moreover, the subcategory $\DCAT_{\cc}(\ca)$ of $\DCAT(\ca)$, consisting of complexes in $\ca$ with cohomology in $\cc$, is triangulated \cite[Tag \spref{06UQ}]{stacks-project}.

The main result of this appendix is

\begin{theorem}\label{T10.1}
Let $\ca$ be a Grothendieck abelian category and let
$\cm\subseteq\ca$ be a weak Serre subcategory closed under coproducts. If $\cm$ is Grothendieck abelian, then $\DCAT_\cm^{}(\ca)$ is well
generated.
\end{theorem}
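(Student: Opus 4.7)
By Lemma~\ref{A.L10.1} combined with Remark~\ref{A.R10.2}, since $\DCAT(\ca)$ is well generated and $\DCAT_\cm(\ca)$ is a localizing subcategory of $\DCAT(\ca)$ (using that $\cm$ is closed under coproducts in $\ca$), the plan is to exhibit a set $S \subseteq \DCAT_\cm(\ca)$ such that every nonzero $X \in \DCAT_\cm(\ca)$ admits a nonzero morphism $s \to X$ in $\DCAT(\ca)$ for some $s \in S$. I would apply the Gabriel--Popescu theorem to the Grothendieck abelian category $\cm$ to obtain a generator $U \in \cm$, and take $S = \{U[n] : n \in \Z\}$.

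The crux of the argument is showing that $S$ detects nonzero objects. Given a nonzero $X \in \DCAT_\cm(\ca)$, pick an integer $i$ with $H^i(X) \neq 0$. Since $U$ generates $\cm$, the hom set $\Hom_\cm(U, H^i(X))$ is nonzero, so it suffices to prove that the natural ``take $H^i$'' map
\[
\Hom_{\DCAT(\ca)}(U[-i], X) \longrightarrow \Hom_\cm(U, H^i(X))
\]
is surjective. I would establish this by applying $\Hom_{\DCAT(\ca)}(U[-i], -)$ to the two truncation triangles
\[
\tau^{\leq i} X \to X \to \tau^{\geq i+1} X \quad \text{and} \quad \tau^{\leq i-1} X \to \tau^{\leq i} X \to H^i(X)[-i],
\]
and invoking the orthogonality of the standard t-structure on $\DCAT(\ca)$: since $U[-i]$ is concentrated in cohomological degree $i$, the groups $\Hom_{\DCAT(\ca)}(U[-i], \tau^{\geq i+1} X[k])$ for $k \in \{-1, 0\}$ and $\Hom_{\DCAT(\ca)}(U[-i], \tau^{\leq i-1} X[1])$ all vanish. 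The first triangle then yields an isomorphism $\Hom_{\DCAT(\ca)}(U[-i], \tau^{\leq i} X) \cong \Hom_{\DCAT(\ca)}(U[-i], X)$, and the second yields a surjection onto $\Hom_\cm(U, H^i(X))$, completing the claim.

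The main conceptual obstacle is that this lifting of a nonzero map in $\cm$ to a nonzero map in $\DCAT(\ca)$ might a priori fail, because the natural functor $\DCAT(\cm) \to \DCAT_\cm(\ca)$ need not be essentially surjective or full (cf.\ Theorem~\ref{T:notfull}). The truncation argument sidesteps this issue by working entirely inside $\DCAT(\ca)$ and avoiding resolutions in $\cm$ altogether; the role of Gabriel--Popescu is essentially just to furnish the single generator $U$, while the weak Serre hypothesis is used only to ensure that $\Hom_\ca(U, H^i(X))$ coincides with $\Hom_\cm(U, H^i(X))$. Once this weak generation is in hand, Lemma~\ref{A.L10.1} concludes the proof.
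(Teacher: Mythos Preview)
Your argument contains a genuine gap in the key vanishing claim. You assert that
\[
\Hom_{\DCAT(\ca)}\bigl(U[-i],\, \tau^{\leq i-1}X[1]\bigr) = 0
\]
by ``orthogonality of the standard t-structure''. But the t-structure axiom only gives $\Hom(\DCAT^{\leq n}, \DCAT^{\geq n+1}) = 0$; there is no vanishing of $\Hom(\DCAT^{\geq n}, \DCAT^{\leq n-1})$. Here $U[-i] \in \DCAT^{\leq i}$ while $\tau^{\leq i-1}X[1] \in \DCAT^{\leq i-2}$, so the axiom does not apply. Concretely, if for instance $H^j(X)=0$ for $j<i-1$, then $\tau^{\leq i-1}X \simeq H^{i-1}(X)[-(i-1)]$ and the group in question is
\[
\Hom_{\DCAT(\ca)}\bigl(U,\, H^{i-1}(X)[2]\bigr) \;=\; \Ext^2_{\ca}\bigl(U, H^{i-1}(X)\bigr),
\]
which has no reason to vanish: $U$ is only a generator of $\cm$, not a projective object of $\ca$. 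Hence the surjectivity of $\Hom_{\DCAT(\ca)}(U[-i],X)\to \Hom_{\cm}(U,H^i(X))$ does not follow, and you cannot conclude that some map $U[-i]\to X$ is nonzero.

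Your argument can be salvaged when $X$ is bounded below: choose $i$ minimal with $H^i(X)\neq 0$, so that $\tau^{\leq i-1}X=0$ and the problematic term disappears. The entire difficulty, however, lies precisely with complexes unbounded below, and this is why the paper's proof is substantially more involved. Rather than mapping from a single object of $\cm$, the paper fixes a regular cardinal $\lambda$ with $\cm^\lambda = \cm\cap\ca^\lambda$ (using Lemma~\ref{A.L10.7} for both $\cm$ and $\ca$) and, given a nonzero $Z\in\DCAT_\cm(\ca)$, builds by descending induction a subcomplex $N^\bullet\subseteq Z^\bullet$ with each $N^j\in\ca^\lambda$ and each cohomology $H^j(N^\bullet)\in\cm^\lambda$, arranged so that $H^0(N^\bullet)\to H^0(Z)$ is nonzero. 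The inductive step repeatedly invokes the lifting property of Lemma~\ref{A.L10.7}(4) to push small subobjects back through the boundary maps of $Z$. This produces a nonzero morphism from an object of the essentially small set $\DCAT^-_{\cm^\lambda}(\ca^\lambda)$, after which Lemma~\ref{A.L10.1} applies. There is no shortcut via a single generator $U$ unless you know $U$ is projective in $\ca$, which is not part of the hypotheses.
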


The example we have in mind is where $X$ is an algebraic stack,
$\ca$ is the category of lisse-\'etale sheaves of $\Orb_X$-modules, and 
$\cm$ is the subcategory of quasi-coherent sheaves.

If $\lambda$ is a cardinal and $\cb$ is a cocomplete category, then we let $\cb^\lambda$ 
denote the subcategory of $\lambda$-presentable objects. If $\cb$ is locally presentable, 
then $\cb^\lambda$ is always a set.

It is clear that $\DCAT_\cm^{}(\ca)$ is a localizing subcategory of 
the well generated triangulated category $\DCAT(\ca)$;
Remark~\ref{A.R10.2}
tells us that to prove Theorem~\ref{T10.1} it suffices to exhibit a
set $S$ of generators for $\DCAT_\cm^{}(\ca)$. The idea is simple enough:
we will find a cardinal $\lambda$ such that 
$S=\DCAT_{\cm^\lambda}^-(\ca^\lambda)\subset \DCAT_\cm^{}(\ca)$, which is
obviously essentially small, suffices.
Thus, the problem becomes to better understand the category of $\lambda$-presentable 
objects in $\ca$. The results 
below are easy to extract from
\cite{MR3212862}, but for the reader's convenience we give a self-contained
treatment.

\begin{remark}\label{A.R10.5}
Let $\ca$ be a Grothendieck abelian category. 
By the Gabriel--Popescu 
theorem, there exists a ring $R$ and a pair of adjoint additive functors
\[
\xymatrix{
F\colon\MOD(R)\ar@<0.5ex>[r] &\ca\cocolon G\ar@<0.5ex>[l],
}\]
such that $F$ is exact and $FG\simeq \id$. Let $\mu$ be an infinite cardinal
$\geq$ to the cardinality of $R$.
\end{remark}

\begin{lemma}\label{A.L10.6}
With notation as in Remark~\ref{A.R10.5}, let $\lambda>\mu$ be a regular
cardinal. Then the $\lambda$-presentable objects of $\ca$ are precisely
the objects of $\ca$ isomorphic to some $FN$, where $N$ is an $R$-module
of cardinality $<\lambda$.
\end{lemma}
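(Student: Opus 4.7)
The plan is to establish both inclusions; the reverse one is direct and relies only on exactness of $F$, while the forward one depends on the key technical input that $G$ preserves $\lambda$-filtered colimits for $\lambda > \mu$.

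For the reverse direction, suppose $A$ is $\lambda$-presentable in $\ca$. Since $FG \simeq \id$, we have $A = FGA$. The submodules of $GA$ of cardinality $<\lambda$ form a $\lambda$-filtered poset (by regularity of $\lambda$, the sum of fewer than $\lambda$ such submodules still has cardinality $<\lambda$), and $GA$ is their colimit in $\MOD(R)$. Applying the colimit-preserving functor $F$ exhibits $A = \colim_\ca FN$ over this $\lambda$-filtered poset of submodules $N \subseteq GA$ with $|N| < \lambda$. By $\lambda$-presentability of $A$, the identity $\id_A$ factors through some $FN_0$, producing a section of the canonical map $FN_0 \to A$ induced by $N_0 \hookrightarrow GA$. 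Exactness of $F$ makes this map a monomorphism, and having a section makes it an epimorphism, so $A \cong FN_0$ with $|N_0| < \lambda$.

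For the forward direction, given $N$ with $|N| < \lambda$, I want $FN$ to be $\lambda$-presentable in $\ca$. The cardinality bounds $\lambda > \mu \geq |R|$ together with regularity of $\lambda$ imply that $N$ admits a presentation by $<\lambda$ generators and $<\lambda$ relations, so $N$ is $\lambda$-presentable in $\MOD(R)$. The adjunction rewrites $\Hom_\ca(FN, -)$ as $\Hom_R(N, G(-))$, so the claim reduces to showing that $G$ preserves $\lambda$-filtered colimits.

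This preservation is the main obstacle, and I would establish it by invoking the Gabriel topology $\cj$ of right ideals $I \subseteq R$ with $R/I \in \cs = \ker F$, together with the standard Gabriel--Popescu fact that the essential image of $G$ consists of the $\cj$-closed modules, i.e., modules $M$ for which the restriction map $M \to \Hom_R(I, M)$ is an isomorphism for every $I \in \cj$. Fix a $\lambda$-filtered diagram $\{A_i\}$ in $\ca$ and set $M := \colim_R GA_i$; then $FM \simeq \colim_\ca A_i$ by exactness of $F$ and $FG \simeq \id$. Every $I \in \cj$ satisfies $|I| \leq |R| = \mu < \lambda$, so $I$ is $\lambda$-presentable in $\MOD(R)$ and $\Hom_R(I, -)$ commutes with the $\lambda$-filtered colimit defining $M$. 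Since each $GA_i$ is $\cj$-closed, the isomorphisms $GA_i \cong \Hom_R(I, GA_i)$ assemble to $M \cong \Hom_R(I, M)$, so $M$ itself is $\cj$-closed. Hence $M \cong GFM \cong G(\colim_\ca A_i)$, which is the desired colimit-preservation.
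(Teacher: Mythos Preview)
Your argument is correct. The direction ``$\lambda$-presentable $\Rightarrow$ isomorphic to $FN$ with $|N|<\lambda$'' coincides with the paper's: write $A=FGA$ as a $\lambda$-filtered colimit of $FN$ with $N\subseteq GA$ small, factor $\id_A$ through some $FN_0$, and use exactness of $F$ to see $FN_0\to A$ is mono, hence an isomorphism.

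The other direction is where you diverge. The paper does \emph{not} prove that $G$ preserves $\lambda$-filtered colimits. Given $\phi\colon FN\to\colim x_i$, it pulls the adjoint map $N\to G(\colim x_i)$ back along the comparison $\rho\colon\colim Gx_i\to G(\colim x_i)$, and then finds by hand a small submodule $M$ of the pullback mapping to $N$ such that $FM\to FN$ is an isomorphism (using only that $F$ kills $\ker\rho$ and $\coker\rho$) and $M\to\colim Gx_i$ factors through some $Gx_i$. Your approach instead proves the stronger statement that $\rho$ is itself an isomorphism, by invoking the Gabriel topology $\cj$ attached to $\ker F$ and the identification of the essential image of $G$ with the $\cj$-closed modules: since every $I\in\cj$ has $|I|\leq|R|<\lambda$, the $\cj$-closedness condition passes to $\lambda$-filtered colimits. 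This is cleaner and gives a reusable fact ($G$ is $\lambda$-accessible), at the cost of importing the torsion-theoretic description of the Gabriel--Popescu localization; the paper's argument is more self-contained but establishes only what is needed.
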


\begin{proof}
Let us first prove that, if $N$ is an $R$-module
of cardinality $<\lambda$, then $FN$ is $\lambda$-presentable.
 Suppose
 $\{x_i,\,i\in I\}$ is a $\lambda$-filtered system
in $\ca$,
and suppose that in
the category $\ca$ we are given a map 
$\phi\colon FN\longrightarrow \colim x_i$.
We need to show that $\phi$ factors through some $x_i$. In the category
of $R$-modules, there is a natural map 
\[\xymatrix@C+20pt{
\colim Gx_i \ar[r]^\rho & G\big[\colim x_i\big].
}\] 
Since $F$ respects colimits and $FG\simeq \id$, the map
$F\rho$ is an isomorphism.
As $F$ is exact it must annihilate the kernel and cokernel of $\rho$. 
Form the pullback square
\[\xymatrix@C+20pt{
P\ar[r]^{\phi}\ar[d] &  N\ar[d]^\theta\\
\colim Gx_i \ar[r]^\rho & G\big[\colim x_i\big].
}\]
The image of $\phi$ is a submodule 
of $N$, hence has cardinality $<\lambda$.
 If we
lift every element of $\text{Image}(\phi)$ arbitrarily to an element of $P$, 
the lifts generate
a submodule $M\subset P$ of cardinality $<\lambda$. 
The kernel (respectively cokernel) of the  map
 $M\to N$ is a submodule of $\text{Kernel}(\rho)$
(respectively $\text{Coker}(\rho)$), and hence both are
annihilated by $F$. Summarizing: 
we have produced a morphism $M\to N$ of $R$-modules, with
$M$ of cardinality $<\lambda$ and $FM\to FN$ 
an isomorphism in $\ca$, and such that the composite
$M\to N\longrightarrow G\big[\colim x_i\big]$ factors through 
$\colim Gx_i$. 
But $\{Gx_i,\,i\in I\}$ is a 
$\lambda$-filtered system in $\MOD(R)$ and $M$ is of cardinality $<\lambda$,
and hence the map factors as $M\to Gx_i$ for some $i\in I$.

It remains to prove the converse: suppose $a\in\ca$ is a $\lambda$-presentable
 object. Then $Ga$ is an $R$-module, hence it is the $\lambda$-filtered 
colimit of all its submodules $N_i$ of cardinality $<\lambda$. But then the 
identity map $a\to a$ is a map from the $\lambda$-presentable
object $a$ to the $\lambda$-filtered 
colimit of the $FN_i$, and therefore factors through some $FN_i$.
Thus $a$ is a direct summand of an object $FN_i$ where
the cardinality of $N_i$ is $<\lambda$. On the other hand the map
$N_i\to Ga$ is injective, hence so is $FN_i\to FGa=a$. Thus $a\cong FN_i$.
\end{proof}

\begin{lemma}\label{A.L10.7}
Let $\ca$ be a Grothendieck abelian category. There is an
infinite  cardinal $\nu$ with the following properties: if $\lambda\geq\nu$ is a regular cardinal, then 
\begin{enumerate}
\item $\ca^\lambda$ is a Serre subcategory of $\ca$;
\item every object of $\ca$ is a $\lambda$-filtered colimit of subobjects
belonging to $\ca^\lambda$;
\item an object belongs to $\ca^\lambda$ if and
only if it is the quotient of a coproduct of $<\lambda$ objects of $\ca^\nu$; and
\item any pair of morphisms $x\to y\leftarrow n$ in $\ca$, where $x\to y$ is epi and
$n\in\ca^\lambda$, may be completed to a commutative square
\[\xymatrix@C+20pt{
m\ar@{->>}[r]\ar[d] & n\ar[d]\\
x\ar@{->>}[r] & y
}\]
with $m\in\ca^\lambda$ and $m\to n$ epi. Moreover, if $n \to y$ is mono, then $m \to x$ can be chosen to be mono.
\end{enumerate}
\end{lemma}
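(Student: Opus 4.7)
The plan is to leverage the Gabriel--Popescu setup from Remark~\ref{A.R10.5} and the concrete description $\ca^\lambda=\{FN:|N|<\lambda\}$ from Lemma~\ref{A.L10.6}. I would take $\nu=\mu^+$, so that Lemma~\ref{A.L10.6} applies for every regular $\lambda\geq\nu$. I first prove (2), since it is the workhorse for the rest: given $A\in\ca$, write $GA$ as the union of its $R$-submodules $N_i$ of cardinality $<\lambda$, which is a $\lambda$-filtered system by regularity of $\lambda$. Because $F$ preserves colimits (as a left adjoint) and monomorphisms (as an exact functor), and $FG\cong\id$, we obtain $A=\colim FN_i$ as a $\lambda$-filtered colimit of subobjects $FN_i\in\ca^\lambda$.

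Next I prove (1). For a monomorphism $A'\hookrightarrow A=FN$ with $|N|<\lambda$, form the pullback $N':=N\times_{GFN}GA'$ in $\MOD(R)$ using the unit $N\to GFN$. Then $|N'|\leq|N|<\lambda$, and since $F$ preserves finite limits and $FG\cong\id$ one computes $FN'=FN\times_{FN}A'=A'$, so $A'\in\ca^\lambda$. Since $N'\to N$ is the pullback of the monomorphism $GA'\to GFN$, it is itself mono, so $F(N/N')=FN/FN'=A/A'$ with $|N/N'|<\lambda$, handling quotients. For an extension $0\to A'\to A\to A''\to 0$ with $A',A''\in\ca^\lambda$, apply (2) to write $A=\colim A_i$ as a $\lambda$-filtered colimit of subobjects in $\ca^\lambda$. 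Exactness of filtered colimits in the Grothendieck category $\ca$ gives $A''=\colim A_i/(A_i\cap A')$, and since $A''\in\ca^\lambda$ the identity factors through some $A_i/(A_i\cap A')\hookrightarrow A''$, forcing $A_i+A'=A$. Hence $A$ is a quotient of $A_i\oplus A'\in\ca^\lambda$, and the quotient case just established gives $A\in\ca^\lambda$.

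For (3), I write $A=FN\in\ca^\lambda$ with $|N|<\lambda$ and pick an epimorphism $R^{(S)}\twoheadrightarrow N$ with $|S|=|N|<\lambda$; applying $F$ presents $A$ as a quotient of $\bigoplus_S FR$, a coproduct of $<\lambda$ copies of the object $FR\in\ca^\nu$. Conversely, a coproduct of $<\lambda$ objects of $\ca^\nu$ has the form $F(\bigoplus_i N_i)$ with $|I|<\lambda$ and each $|N_i|<\nu\leq\lambda$; regularity of $\lambda$ bounds $|\bigoplus_i N_i|<\lambda$, so the coproduct lies in $\ca^\lambda$ and so does any quotient by (1). For (4), form the pullback $P=x\times_y n$; then $P\to n$ is epi, and $P\to x$ is mono whenever $n\to y$ is. Apply (2) to write $P=\colim P_j$ as a $\lambda$-filtered colimit of subobjects $P_j\in\ca^\lambda$. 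By exactness of filtered colimits, $n=\colim\im(P_j\to n)$, and since $n\in\ca^\lambda$ the identity $\id_n$ factors through some $\im(P_j\to n)$, forcing this image to equal $n$ and $P_j\twoheadrightarrow n$. Setting $m:=P_j$ gives the desired square, monic over $x$ whenever $n\to y$ is.

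The main obstacle I anticipate is closure under extensions in (1): this step does not follow directly from Gabriel--Popescu but requires the bootstrap through (2) together with the already-established subobject and quotient cases. A secondary delicate point is the cardinal arithmetic in (3), where regularity of $\lambda$ is essential in order to bound the cardinality of coproducts of modules of size $<\nu$ indexed by sets of size $<\lambda$.
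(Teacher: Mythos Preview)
Your proof is correct and follows essentially the same approach as the paper: both take $\nu=\mu^+$, use the Gabriel--Popescu description of $\ca^\lambda$ from Lemma~\ref{A.L10.6}, prove (2) by filtering $GA$ by small submodules, and then deduce (4) and closure under extensions by writing the relevant pullback as a $\lambda$-filtered colimit of small subobjects. The only cosmetic difference is that the paper postpones the extension case of (1) until after (4) (applying (4) to $x\twoheadrightarrow n\xleftarrow{\id}n$), whereas you derive it directly from (2); the underlying argument is identical.
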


\begin{proof}
Let $\nu=\mu+1$ be the successor of the infinite cardinal $\mu$ of
Remark~\ref{A.R10.5}. By Lemma~\ref{A.L10.6} the objects of $\ca^\lambda$ are
precisely the ones isomorphic to $FM$ where $M$ is of cardinality $<\lambda$.

For (1), it is readily verified that a subobject (resp.\ a quotient) of $FM$
can be expressed as $FN$ where $N$ is a submodule (resp.\ a quotient) of
$M$. This shows that $\ca^\lambda$ is closed under taking subobjects and
quotients; we will later see that it is also closed under extensions.

For (2), if $a$ is an object of $\ca$ then $Ga$ is
the $\lambda$-filtered colimit of all the submodules $M_i\subset Ga$ of
cardinality $<\lambda$, and $a\cong FGa$ is the $\lambda$-filtered colimit
of $FM_i\in\ca^\lambda$. 

For (3), observe that any coproduct
of $<\lambda$ objects in $\ca^\lambda$ belongs to $\ca^\lambda$, and
if $M$ is a module of cardinality $<\lambda$ then $M$ is a quotient
of the free module on all its elements, which is a coproduct of $<\lambda$
copies of $R$. Thus $FM$ is the quotient of a coproduct of $<\lambda$
copies of $FR\in\ca^\nu$.

For (4), let $x\rightarrow y\leftarrow n$ be a pair of morphisms in $\ca$,
with $x\to y$ epi and
$n\in\ca^\lambda$. Let $\tilde{m}$ be the pullback of $n \to y$ along $x\to y$. It is sufficient to find a 
subobject $m$ of $\tilde{m}$ belonging to $\ca^\lambda$ such that $m \to \tilde{m} \to n$ is epi. By 
(2), we may express $\tilde{m}=\colim m_i$ as a $\lambda$-filtered colimit of subobjects belonging 
$\ca^\lambda$. If $n_i \subseteq n$ is the image of $m_i$ in $n$, then (1) implies that $n_i \in \ca^\lambda$. Since $n \in \ca^\lambda$, there is an $i$ such that $n_i=n$. Taking $m=m_i$ does the job. By construction, if $n \to y$ is mono, then $m \to x$ is mono. 

Finally, to show that $\ca^\lambda$ is closed under extensions, we note that if $0 \to k \to x \to n \to 0$ is an exact sequence with $k$, $n\in \ca^\lambda$, then (4) implies that there is a suboboject $m$ of $x$ such that $m\in \ca^\lambda$ and $m \to n$ is epi. It follows that $k\oplus m \to x$ is epi and consequently, $x\in \ca^\lambda$, as required.
\end{proof}

\begin{proof}[Proof of Theorem~\ref{T10.1}]
Because $\cm$ and $\ca$ are both Grothendieck abelian categories we may choose
regular cardinals $\nu$ for $\cm$ and $\nu'$ for $\ca$ as in
Lemma~\ref{A.L10.7}. The category $\cm^\nu$ is an essentially small subcategory
of $\ca$, hence must be contained in $\ca^\beta$ for some 
regular cardinal $\beta$. Let
$\lambda$ be a regular cardinal $>\max(\beta,\nu,\nu')$.
By construction $\cm^\nu\subset\ca^\lambda$, and by Lemma~\ref{A.L10.7} every object
in $\cm^\lambda$ is the quotient of a coproduct of $<\lambda$
objects in $\cm^\nu$. Hence $\cm^\lambda\subset\ca^\lambda$. But since
every object in $\cm\cap\ca^\lambda$ is $\lambda$-presentable in $\ca$ 
it must be $\lambda$-presentable in the smaller $\cm$, and we conclude
that $\cm\cap\ca^\lambda=\cm^\lambda$.

We have now made our choice of $\lambda$ and we let $\cb=\ca^\lambda$. 
By Remark~\ref{A.R10.2} it 
suffices to show that,
  given any non-zero object $Z\in\DCAT_\cm^{}(\ca)$, there is an object
  $N\in\DCAT_{\cb\cap\cm}^-(\cb)$ and a non-zero map $N \to Z$.
If $Z$ is the chain complex
\[
\xymatrix{
\cdots \ar[r] & Z^{i-1} \ar[r]^-{\partial} & Z^i \ar[r]^-{\partial}&
Z^{i+1} \ar[r] & \cdots,}
\]
we let $Y^i\subseteq Z^i$ be the cycles,
in other words the kernel of $\partial\colon Z^i \to Z^{i+1}$,
and $X^i\subseteq Y^i$ be the boundaries,
that is the image of $\partial\colon Z^{i-1} \to Z^i$.
We are assuming that
 $Z\in\DCAT_\cm^{}(\ca)$ is non-zero, meaning its cohomology is not all
zero; without loss of generality we may assume $H^0(Z)\neq0$.
Thus $Y^0/X^0$ is a non-zero object of $\cm$.

By Lemma~\ref{A.L10.7}, applied to $\cb\cap\cm=\cm^\lambda\subset\cm$,
 the object
$Y^0/X^0\in\cm$ is a $\lambda$-filtered 
colimit of its subobjects belonging 
to $\cb\cap\cm$; since  $Y^0/X^0\neq0$ we may choose a subobject
$M\subseteq Y^0/X^0$, with $M\in\cb\cap\cm$ and $M\neq0$. 
By Lemma~\ref{A.L10.7},
applied to the pair of maps $Y^0 \to Y^0/X^0\leftarrow M$ in $\ca$,
we may complete to a commutative
square
\[\xymatrix@C+20pt{
N^0\ar@{->>}[r]^\phi\ar[d] & M\ar[d]\\
Y^0\ar@{->>}[r] & Y^0/X^0
}\]
with $N^0\in\cb$.
Since $Y^0$ is the
kernel of $Z^0 \to Z^1$ this gives us a commutative square
\[\xymatrix{
 N^0\ar[r]\ar[d] & 0\ar[d] \\
Z^0\ar[r] & Z^1
}\]
such that the 
image of the map $N^0 \to Y^0/X^0=H^0(Z)$ 
is non-zero and belongs
to $\cb\cap\cm$.

We propose to inductively extend this to the left.
We will define a commutative diagram
\[\xymatrix@-1ex{
 & & N^i \ar[r]\ar[d]& N^{i+1} \ar[r]\ar[d]& \cdots\ar[r]&
 N^{-1}\ar[d]\ar[r]& N^0\ar[r]\ar[d] & 0\ar[r]\ar[d] &\cdots\\
\cdots \ar[r]& Z^{i-1} \ar[r] & Z^i \ar[r]& Z^{i+1} \ar[r]& \cdots\ar[r]&
 Z^{-1}\ar[r]& Z^0\ar[r] & Z^1\ar[r] &\cdots
}\]
where
\begin{enumerate}
\item
 The subobjects $N^j \subseteq Z^j$ belong to $\cb$.
\item
For $j>i$ the cohomology of $N^{j-1} \to N^j \to N^{j+1}$ belongs
to $\cb\cap\cm$.
\item
Let $K^i$ be the kernel of the map $N^i \to N^{i+1}$. Then
the image of the natural map $K^i \to H^i(Z)$ belongs
to $\cb\cap\cm$.
\setcounter{enumiii}{\value{enumi}}
\end{enumerate}
Since we have constructed $N^0$ we only need to prove
the inductive step. Let us therefore suppose we have constructed
the diagram as far as $i$; we need to extend it to $i-1$.
We first form the pullback square
\[\xymatrix@C+20pt{
L^i\ar[r]\ar[d] & K^i\ar[d]\\
X^i\ar[r] & Y^i
}\]
Since $X^i\to Y^i$ and $K^i\to Y^i$ are monomorphisms so are $L^i\to K^i$
and $L^i\to X^i$. Since 
$N^i$ belongs to $\cb$ so do its subobjects $L^i\subset K^i$. The
cokernel of $L^i\to K^i$ is the image of $K^i\to Y^i/X^i=H^i(Z)$,
and belongs to $\cb\cap\cm$ by (3).
Next we apply Lemma~\ref{A.L10.7} to the pair of maps
$Z^{i-1}/X^{i-1}\to X^i\leftarrow L^i$ in $\ca$, completing to
a commutative square
\[\xymatrix@C+20pt{
M^i\ar@{->>}[r]\ar[d] & L^i\ar[d]\\
Z^{i-1}/X^{i-1}\ar@{->>}[r] & X^i
}\]
with $M^i\in\cb$. Form the pullback
\[\xymatrix@C+20pt{
\widetilde M^i\ar[r]\ar[d] & M^i\ar[d]\\
Y^{i-1}/X^{i-1}\ar[r] & Z^{i-1}/X^{i-1}
}\]
Since $Y^{i-1}/X^{i-1}\to Z^{i-1}/X^{i-1}$ is injective so is 
$\widetilde M^i\to M^i$, making
$\widetilde M^i$ a subobject of $M^i\in\cb$. Hence $\widetilde
M^i$ belongs to $\cb$. But
now the map $\widetilde M^i\to Y^{i-1}/X^{i-1}=H^{i-1}(Z)$ is a morphism
from the $\lambda$-presentable object $\widetilde M^i\in\cb=\ca^\lambda$ to
the object $H^{i-1}(Z)\in\cm$, which by Lemma~\ref{A.L10.7} is a
$\lambda$-filtered colimit of its subobjects in $\cm^\lambda=\cb\cap\cm$.
Hence the map $\widetilde M^i\to Y^{i-1}/X^{i-1}$ factors as 
$\widetilde M^i\to P^i\to Y^{i-1}/X^{i-1}$ with $P^i\in\cb\cap\cm$
a subobject of $Y^{i-1}/X^{i-1}$. 
Form the pushout square in $\cb$
\[\xymatrix@C+20pt{
\widetilde M^i\ar[r]\ar[d] & M^i\ar[d]\\
P^i\ar[r] & Q^i
}\]
and let $Q^i\to Z^{i-1}/X^{i-1}$ be the natural map. We have a commutative square
\[\xymatrix@C+20pt{
Q^i\ar@{->>}[r]^\phi\ar[d] & L^i\ar[d]\\
Z^{i-1}/X^{i-1}\ar@{->>}[r] & X^i
}\]
and the kernel of $\phi$ maps isomorphically to the subobject
$P^i\subset H^{i-1}(Z)$, with $P^i\in\cb\cap\cm$. Finally apply
Lemma~\ref{A.L10.7} to the pair of maps $Z^{i-1}\to Z^{i-1}/X^{i-1}\leftarrow
Q^i$ to complete to a square
\[\xymatrix@C+20pt{
N^{i-1}\ar@{->>}[r]\ar[d] & Q^i\ar[d]\\
Z^{i-1}\ar@{->>}[r] &Z^{i-1}/X^{i-1} 
}\]
with $N^i\in\cb$. We leave it to the reader to check that
the diagram
\[\xymatrix@-1ex{
 & & N^{i-1} \ar[r]\ar[d]& N^{i} \ar[r]\ar[d]& \cdots\ar[r]&
 N^{-1}\ar[d]\ar[r]& N^0\ar[r]\ar[d] & 0\ar[r]\ar[d] &\cdots\\
\cdots \ar[r]& Z^{i-2} \ar[r] & Z^{i-1} \ar[r]& Z^{i} \ar[r]& \cdots\ar[r]&
 Z^{-1}\ar[r]& Z^0\ar[r] & Z^1\ar[r] &\cdots
}\]
satisfies hypotheses (1), (2) and (3) of our induction.
\end{proof}

\section{\texorpdfstring{$\DQCOH(X)$}{D(X)} is left-complete}\label{APP:left-complete}
In this section we prove the following Theorem.
\begin{theorem}\label{APP:T:left-complete}
  If $X$ is an algebraic stack, then $\DQCOH(X)$ is well generated. In
  particular, it admits small products. Moreover, $\DQCOH(X)$ is left-complete.
\end{theorem}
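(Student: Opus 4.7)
The plan has three components: well-generation, existence of products (which follows formally), and left-completeness.

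First, for well-generation, I would apply Theorem~\ref{T10.1} to $\ca = \MOD(X)$ (lisse-\'etale $\Orb_X$-modules) and $\cm = \QCOH(X)$. Both are Grothendieck abelian \cite[Tag~\spref{0781}]{stacks-project}, $\QCOH(X)$ is closed under coproducts inside $\MOD(X)$, and $\QCOH(X)$ is a weak Serre subcategory of $\MOD(X)$ (verified smooth-locally, reducing to the scheme case where this is standard). Theorem~\ref{T10.1} then yields that $\DCAT_{\QCOH(X)}(\MOD(X)) = \DQCOH(X)$ is well generated, hence admits small products by \cite[Prop.~1.20]{MR1812507}.

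For left-completeness I would descend from $\DCAT(\MOD(X))$. Let $\iota \colon \DQCOH(X) \hookrightarrow \DCAT(\MOD(X))$ denote the inclusion. Then $\iota$ is fully faithful, preserves small coproducts, and is $t$-exact (the weak Serre property of $\QCOH(X)\subseteq \MOD(X)$ implies that kernels and cokernels of morphisms in $\QCOH(X)$, formed in $\MOD(X)$, lie in $\QCOH(X)$, so $\trunc{\geq -n}$ preserves $\DQCOH(X)$). Well-generation of $\DQCOH(X)$ combined with Brown representability \cite[Prop.~1.20]{MR1812507} provides a right adjoint $R \colon \DCAT(\MOD(X)) \to \DQCOH(X)$, with $R\iota \simeq \id$. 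Granting that $\DCAT(\MOD(X))$ is itself left-complete, for every $M \in \DQCOH(X)$ one has $\iota M \simeq \holim{n} \trunc{\geq -n} \iota M$ in $\DCAT(\MOD(X))$. Applying $R$, which preserves homotopy limits as a right adjoint, together with $t$-exactness of $\iota$, gives
\[
M \simeq R\iota M \simeq R\Bigl(\holim{n} \trunc{\geq -n} \iota M\Bigr) \simeq \holim{n} R\iota \trunc{\geq -n} M \simeq \holim{n} \trunc{\geq -n} M,
\]
where the last homotopy limit is computed in $\DQCOH(X)$. This is exactly the left-completeness of $\DQCOH(X)$.

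The hard part will be justifying left-completeness of $\DCAT(\MOD(X))$ itself. This is \emph{not} a formal consequence of $\MOD(X)$ being Grothendieck abelian: the paper exhibits {\badstack} stacks $X$ with affine diagonal for which $\DCAT(\QCOH(X))$ is not left-complete. What saves $\MOD(X)$ is its richer structure as the module category of the ringed lisse-\'etale site of $X$; one can invoke K-injective resolutions and a Milnor-sequence / $\lim^1$ argument bounding the higher derived functors of countable products, or alternatively descend a Godement-type resolution from a smooth atlas where the corresponding statement for sheaves of modules on a scheme is known, along the lines of \cite[Tag~\spref{08IY}]{stacks-project}.
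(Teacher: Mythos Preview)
Your well-generation argument is exactly the paper's (the paper cites \cite[Cor.~1.18]{MR1812507} rather than Prop.~1.20 for the existence of products, but this is cosmetic). Your left-completeness strategy is also structurally identical: embed $\DQCOH(X)$ as a $t$-exact coproduct-closed full subcategory of a larger derived category, produce a right adjoint $R$ to the inclusion $\omega$ via Brown representability, use $R\omega\simeq\id$, and transfer the homotopy-limit description from the ambient category.

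The substantive difference---and the place where your proposal has a real gap---is the choice of ambient category. You work in $\DCAT(\MOD(X_{\lisset}))$ and flag its left-completeness as ``the hard part'', offering only a sketch. The paper instead passes to the strictly simplicial \'etale site $U^+_{\bullet,\et}$ attached to a smooth atlas $U\to X$, using the equivalence $\DQCOH(X)\simeq\DQCOH(U^+_{\bullet,\et})$. This is not incidental: the slice of $U^+_{\bullet,\et}$ over an affine object $(W\to U_n)$ is literally the small \'etale site of $W$, so quasi-coherent sheaves have vanishing higher cohomology there, and \cite[Tag~\spref{08U3}]{stacks-project} applies directly to give the required distinguished triangle for $\omega(K)$. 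On the lisse-\'etale site this step is much murkier---slices over affine objects are not small \'etale sites (this non-functoriality is exactly why the simplicial model exists)---and neither of your suggested fixes (a Milnor/$\lim^1$ bound on derived countable products, or descent of a Godement-type resolution) comes with an argument. Note also that you only need the homotopy-limit statement for complexes with quasi-coherent cohomology, not full left-completeness of the ambient category; the paper's simplicial-site argument exploits precisely this, since the vanishing input to Tag~\spref{08U3} is only asserted for quasi-coherent $M$.
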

\begin{proof}
  The subcategory $\QCOH(X) \subseteq \MOD(X_{\lisset})$ is weak Serre and the inclusion 
  is coproduct preserving. Since
  $\QCOH(X)$ and $\MOD(X_{\lisset})$ are Grothendieck
  abelian categories
  \cite[\href{http://stacks.math.columbia.edu/tag/07A5}{07A5} \&
  \href{http://stacks.math.columbia.edu/tag/0781}{0781}]{stacks-project},
  it follows that $\DQCOH(X)$ is well generated (Theorem
  \ref{T10.1}). By \cite[Cor.~1.18]{MR1812507}, $\DQCOH(X)$ admits small
  products.

  It remains to prove that $\DQCOH(X)$ is left-complete. Let $p\colon U
  \to X$ be a smooth surjection from an algebraic space $U$. Let
  $U^+_{\bullet,\mathrm{\acute{e}t}}$ denote the resulting strictly
  simplicial algebraic space \cite[\S 4.1]{MR2312554}. By
  \cite[Ex.~2.2.5]{MR2434692}, there is an equivalence of triangulated
  categories $\DQCOH(X) \simeq
  \DQCOH(U^+_{\bullet,\mathrm{\acute{e}t}})$. The inclusion
  $\QCOH(U^+_{\bullet,\mathrm{\acute{e}t}}) \subseteq
  \MOD(U^+_{\bullet,\mathrm{\acute{e}t}})$ is exact, stable under
  extensions, and coproduct preserving. It follows that the functor
  $\omega\colon \DQCOH(U^+_{\bullet,\mathrm{\acute{e}t}}) \to
  \DCAT(U^+_{\bullet,\mathrm{\acute{e}t}})$ is exact and coproduct
  preserving. As we already have seen, the category $\DQCOH(X) \simeq
  \DQCOH(U^+_{\bullet,\mathrm{\acute{e}t}})$ is well generated. Thus the
  functor $\omega$ admits a right adjoint $\lambda$
  \cite[Prop.~1.20]{MR1812507}. Because the functor $\omega$ is fully
  faithful, the adjunction $\mathrm{id} \Rightarrow \lambda\circ
  \omega$ is an isomorphism of functors. 

  Note that because $\lambda$ is a right adjoint, it preserves
  products. In particular, it remains to prove that if $K \in
  \DQCOH(U^+_{\bullet,\mathrm{\acute{e}t}})$,
  then there exists a distinguished triangle in
  $\DCAT(U^+_{\bullet,\mathrm{\acute{e}t}})$ (where we also take the
  products in $\DCAT(U^+_{\bullet,\mathrm{\acute{e}t}})$):
  \[
  \xymatrix{\omega(K) \ar[r] & \prod_{n\geq 0} \tau^{\geq -n} \omega(K)
    \ar[r]^-{\mathrm{1-shift}} & \prod_{n\geq 0} \tau^{\geq
      -n}\omega(K) \ar[r] & \omega(K)[1].}
  \]
  Indeed, this follows from the observation that $\tau^{\geq
    -n}\omega(K) \simeq \omega(\tau^{\geq -n}K)$ for all integers $n$ and $K \to
  \lambda\circ \omega(K)$ is an isomorphism. 

  Let $(W \to U_n)$ be an object of
  $U^+_{\bullet,\mathrm{\acute{e}t}}$. The resulting slice
  $U^+_{\bullet,\mathrm{\acute{e}t}}/(W \to U_n)$ is equivalent to the
  small \'etale site on $W$. In particular, it follows that if $W$ is
  an affine scheme and $M\in \QCOH(U^+_{\bullet,\mathrm{\acute{e}t}})
  \cong \QCOH(X)$, then $H^p(U^+_{\bullet,\mathrm{\acute{e}t}}/(W \to
  U_n),M) = 0$ for all $p>0$
  \cite[\href{http://stacks.math.columbia.edu/tag/01XB}{01XB} \&
  \href{http://stacks.math.columbia.edu/tag/0756}{0756}]{stacks-project}. Now
  let $\mathcal{B} \subseteq U^+_{\bullet,\mathrm{\acute{e}t}}$ denote
  the full subcategory consisting of those objects $(W \to U_n)$,
  where $W$ is an affine scheme. It follows that $\mathcal{B}$ satisfies the
  requirements of
  \cite[\href{http://stacks.math.columbia.edu/tag/08U3}{08U3}]{stacks-project}
  and we deduce the result. 
\end{proof}
\section{The bounded below derived category}
In this section, we prove an analog of \cite[Cor.~II.7.19]{MR0222093} for noetherian 
algebraic stacks that are affine-pointed. Essentially for free, we will also establish Lurie's 
result \cite[Thm.~3.8]{lurie_tannaka}.
\begin{theorem}\label{T:bdd_below}
  Let $X$ be an algebraic stack. If $X$ is either quasi-compact with affine diagonal or 
  noetherian and affine-pointed, then the natural functor
  \[
 \Psi_X^+\colon \DCAT^+(\QCOH(X)) \to \DQCOH^+(X)
  \]
  is an equivalence.
\end{theorem}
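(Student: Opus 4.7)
The plan is to establish fully faithfulness and essential surjectivity of $\Psi_X^+$ separately, the former being the main technical issue.

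For fully faithfulness, a standard $t$-structure bootstrap argument reduces the comparison map
\[
\Hom_{\DCAT(\QCOH(X))}(M,N) \to \Hom_{\DQCOH(X)}(\Psi_X M, \Psi_X N)
\]
(for $M, N \in \DCAT^+(\QCOH(X))$) to showing that for all $F, G \in \QCOH(X)$ and all $i \ge 0$ the natural map $\Ext^i_{\QCOH(X)}(F, G) \to \Ext^i_{\MOD(X)}(F, G)$ is an isomorphism. Both sides are universal cohomological $\delta$-functors in $G$ that agree in degree $0$ (since $\QCOH(X) \subseteq \MOD(X)$ is a fully faithful exact inclusion), so the universal property of derived functors reduces everything to an effaceability statement: $\Ext^i_{\MOD(X)}(F, I) = 0$ for $i > 0$, $F \in \QCOH(X)$, and $I \in \QCOH(X)$ injective. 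This is precisely the local content of Lemma \ref{L:bdd_below_coho}, the same input used in the proof of Proposition \ref{P:coho_all}.

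For essential surjectivity, given $M \in \DQCOH^+(X)$ with $H^i(M) = 0$ for $i < 0$, one inductively constructs lifts $A_n \in \DCAT^+(\QCOH(X))$ of $\tau^{\le n} M$. Take $A_0 = H^0(M)[0]$. Given $A_{n-1}$ together with an isomorphism $\Psi_X(A_{n-1}) \simeq \tau^{\le n-1} M$, the connecting morphism $H^n(M)[-n] \to \tau^{\le n-1} M[1] \simeq \Psi_X(A_{n-1})[1]$ of the truncation triangle lifts, by the fully faithfulness established above, to a morphism $H^n(M)[-n] \to A_{n-1}[1]$ in $\DCAT^+(\QCOH(X))$, and its fiber is the desired $A_n$. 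Because each transition $A_{n-1} \to A_n$ is an isomorphism on cohomology in degrees $\le n-1$, the system stabilizes in every fixed cohomological degree, and its homotopy colimit $A$ in $\DCAT^+(\QCOH(X))$ satisfies $\Psi_X^+(A) \simeq M$ by $t$-exactness of $\Psi_X$.

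The main obstacle is the effaceability assertion, i.e.\ the local content of Lemma \ref{L:bdd_below_coho}. This is where the hypotheses on $X$ enter essentially: choose a smooth surjection $p\colon U \to X$ with $U$ an affine scheme and form the strictly simplicial algebraic space $U^+_{\bullet,\et}$ as in Appendix \ref{APP:left-complete}. Either the affine diagonal or the noetherian affine-pointed hypothesis ensures that the structure morphisms of $U^+_{\bullet,\et}$ are sufficiently affine over $X$, and a \v{C}ech-type descent spectral sequence then reduces the vanishing of $\Ext^i_{\MOD(X)}(F, I)$ to Serre's affine vanishing theorem for quasi-coherent cohomology in each simplicial degree.
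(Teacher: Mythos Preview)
Your overall architecture matches the paper's: reduce full faithfulness to the comparison of $\Ext$-groups, reduce that via a $\delta$-functor argument to the effaceability statement $\Ext^i_{\MOD(X)}(F,I)=0$ for $i>0$, $F\in\QCOH(X)$, and $I$ injective in $\QCOH(X)$; then get essential surjectivity by inductively lifting Postnikov truncations and passing to a homotopy colimit. That part is fine and essentially identical to the paper.

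The gap is in how you justify the effaceability statement. Lemma~\ref{L:bdd_below_coho} does \emph{not} assert $\Ext^i_{\MOD(X)}(F,I)=0$ for arbitrary quasi-coherent $F$; it only gives vanishing of sheaf cohomology $H^q(U_{\lisset},I)$ and of higher pushforwards $\RDERF^q(f_{\lisset})_*I$. These are the special case $F=\Orb_X$ (or a relative version thereof), and this is exactly the input used in Proposition~\ref{P:coho_all}, but it is strictly weaker than what you need here. Your proposed \v{C}ech/simplicial argument does not bridge the gap either: \v{C}ech-to-derived spectral sequences compute $\RDERF\Gamma$ (equivalently $\Ext^i(\Orb_X,-)$), not $\Ext^i(F,-)$ for general $F$, and ``Serre's affine vanishing'' is a statement about cohomology, not about $\Ext$ against an arbitrary quasi-coherent sheaf. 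Nor can you simply restrict to affines in the cover, since the restriction of an injective of $\QCOH(X)$ need not be injective.

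The paper closes this gap with an additional structural input you did not invoke: Lemma~\ref{L:inj_stk} shows that any injective $I$ of $\QCOH(X)$ is a direct summand of $(p_{\QCOH})_*J$ for a smooth surjection $p\colon\spec A\to X$ and an injective $A$-module $J$. Lemma~\ref{L:bdd_below_coho} then says $(p_{\QCOH})_*J\simeq\RDERF(p_{\lisset})_*J$, so adjunction gives
\[
\Ext^q_{\Orb_X}\bigl(F,(p_{\QCOH})_*J\bigr)\cong \Ext^q_{\Orb_{\spec A}}(p^*F,J),
\]
which vanishes for $q>0$ by the affine case. In other words, Lemma~\ref{L:bdd_below_coho} is used, but only after the reduction via Lemma~\ref{L:inj_stk} and adjunction; it does not by itself deliver the effaceability you need.
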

For noetherian algebraic spaces, a version of Theorem \ref{T:bdd_below} for the 
unbounded derived category was proved in \cite[Tag \spref{09TN}]{stacks-project} and we 
will closely follow this approach. The following two lemmas do most of the work.
\begin{lemma}[cf.\ {\cite[Tag \spref{09TJ}]{stacks-project}}]\label{L:inj_stk}
  Let $X$ be a quasi-compact and quasi-separated algebraic stack and let $I$ be an injective 
  object of  $\QCOH(X)$.
  \begin{enumerate}
  \item \label{L:inj_stk:aff} Then $I$ is a direct 
    summand of $p_*J$, where $p\colon \spec A \to X$ is smooth and surjective and 
    $J$ is an injective $A$-module.
  \item \label{L:inj_stk:noeth} If $X$ is noetherian, then $I$ is a direct 
    summand of a filtered 
    colimit $\colim_i {F}_i$ of quasi-coherent sheaves of the form
    ${F}_i = \gamma_* G_i$, where $\gamma\colon Z_i\to X$ is a morphism
    from an artinian scheme $Z_i$ and ${G}_i \in \COH(Z_i)$.
  \end{enumerate}
\end{lemma}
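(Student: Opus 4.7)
The plan is to prove (1) first and then bootstrap to (2) by applying Matlis's structure theory on a noetherian smooth cover.

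For (1), choose a smooth surjection $p\colon \spec A \to X$ with $\spec A$ affine. Since $X$ is quasi-separated, $p$ is representable, quasi-compact, and quasi-separated, so $p_*$ carries $\QCOH(\spec A)$ into $\QCOH(X)$. The strategy is to show that $I$ injects into $p_*J$ for a suitable injective $A$-module $J$. The unit $\eta_I\colon I \to p_*p^*I$ is a monomorphism: since $p$ is faithfully flat, $p^*$ is faithful and exact, so it suffices to check this after applying $p^*$, and the triangle identity exhibits $p^*\eta_I$ as a split monomorphism with retraction the counit $\varepsilon_{p^*I}$. Embed $p^*I$ into an injective $A$-module $J$ (possible since $\MOD(A)$ has enough injectives) and apply the left-exact functor $p_*$ to obtain a composite monomorphism $I \hookrightarrow p_*p^*I \hookrightarrow p_*J$. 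Injectivity of $I$ in $\QCOH(X)$ splits this monomorphism, exhibiting $I$ as a direct summand of $p_*J$.

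For (2), because $X$ is noetherian we may additionally arrange that $A$ is noetherian. By (1), $I$ is a summand of $p_*J$ for some injective $A$-module $J$. The structure theorem for injective modules over a noetherian ring yields a decomposition $J \cong \bigoplus_\alpha E(A/\mathfrak{p}_\alpha)$ into indecomposable injective hulls. Each $E(A/\mathfrak{p})$ is naturally an $A_\mathfrak{p}$-module (elements outside $\mathfrak{p}$ act invertibly by the essentiality of $A/\mathfrak{p} \subseteq E(A/\mathfrak{p})$) and, by Matlis, is $\mathfrak{p}A_\mathfrak{p}$-torsion. Hence $E(A/\mathfrak{p}) = \colim_n E_n^{\mathfrak{p}}$, where $E_n^{\mathfrak{p}} := \Ann_{E(A/\mathfrak{p})}(\mathfrak{p}^n) \cong \Hom_{A_\mathfrak{p}}(A_\mathfrak{p}/\mathfrak{p}^n A_\mathfrak{p}, E(A/\mathfrak{p}))$. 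The latter is the Matlis dual of the finite length $A_\mathfrak{p}$-module $A_\mathfrak{p}/\mathfrak{p}^n A_\mathfrak{p}$, so $E_n^{\mathfrak{p}}$ has finite length over the artinian local ring $A_\mathfrak{p}/\mathfrak{p}^n A_\mathfrak{p}$ and corresponds to a coherent sheaf $\widetilde{E_n^{\mathfrak{p}}}$ on the artinian scheme $Z_n^{\mathfrak{p}} = \spec A_\mathfrak{p}/\mathfrak{p}^n A_\mathfrak{p}$. Because $p$ is quasi-compact and quasi-separated, $p_*$ on quasi-coherent sheaves preserves filtered colimits and direct sums, so reorganization gives
\[
p_*J \cong \colim_{(F,n)} \bigoplus_{\alpha \in F}(p \circ \iota_{\alpha,n})_* \widetilde{E_n^{\mathfrak{p}_\alpha}},
\]
a filtered colimit indexed by pairs $(F,n)$ where $F$ is a finite subset of the index set and $n \geq 1$, with $\iota_{\alpha,n}\colon Z_n^{\mathfrak{p}_\alpha} \to \spec A$ the canonical map. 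Each term is the pushforward $\gamma_* G$ of a coherent sheaf from the artinian scheme $\coprod_{\alpha \in F} Z_n^{\mathfrak{p}_\alpha}$, giving the required form for the summand $I$.

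The main obstacle is the bookkeeping in part (2): identifying each indecomposable injective $E(A/\mathfrak{p})$ as a colimit of coherent sheaves on artinian schemes via Matlis duality, and then showing that pushforward along $p$ preserves enough colimits and coproducts to yield a single filtered colimit of the prescribed shape. Part (1) is essentially formal once one observes that faithfully flat pullback splits the unit and that $p_*$ preserves quasi-coherence under the qcqs hypothesis.
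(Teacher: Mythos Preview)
Your proof is correct and follows the same approach as the paper. For part (1) the arguments are essentially identical: embed $p^*I$ into an injective and pass back via the adjunction, using faithful flatness of $p$ to see that the unit is a monomorphism (the paper phrases this last step more tersely as ``by adjunction, we have an inclusion''). For part (2) the paper simply reduces to the affine case and cites the Stacks Project tag 09TI as well-known, whereas you unpack this by invoking the Matlis decomposition $J\cong\bigoplus_\alpha E(A/\mathfrak p_\alpha)$, writing each $E(A/\mathfrak p)$ as a filtered colimit of finite-length modules over artinian thickenings, and then using that $p_*$ (being qcqs) commutes with filtered colimits and coproducts to reassemble everything into the required filtered colimit of pushforwards from artinian schemes; this is exactly the content of the cited tag, so the two proofs agree in substance.
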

\begin{proof}
  Let $p\colon U \to X$ be a smooth and surjective morphism, where $U=\spec A$ is an affine scheme. 
  Let $I$ be an injective object of $\QCOH(X)$. Choose an injective object $J$ 
  of $\QCOH(U)$ and an injection $p^*I \subseteq J$. By adjunction, we have an inclusion $I 
  \subseteq p_*J$. Since $p^*$ is exact, $p_*J$ is injective in 
  $\QCOH(X)$ and $I$ is a direct summand of $p_*J$. This proves 
  \itemref{L:inj_stk:aff}. For \itemref{L:inj_stk:noeth}: we may now reduce to the 
  case where $X=U$. The result is now well-known (e.g., \cite[Tag 
  \spref{09TI}]{stacks-project}).
\end{proof}
\begin{lemma}[cf.\ {\cite[Tag \spref{09TL}]{stacks-project}}]\label{L:bdd_below_coho}
  Let $X$ be an algebraic stack and let $I$ be an injective object of 
  $\QCOH(X)$. If $X$ is quasi-compact with affine diagonal (resp.\ noetherian and 
  affine-pointed), then 
  \begin{enumerate}
    \item \label{L:bdd_below_coho:global} $H^q(U_{\lisset},I) = 0$ for every $q>0$ and smooth morphism $u\colon U \to X$ that is affine (resp.\ has affine fibers);
    \item \label{L:bdd_below_coho:local} for any morphism $f\colon X \to Y$ of algebraic stacks, where
      $Y$ has affine diagonal (resp.\ $Y$ is affine-pointed) we have
      $\RDERF^q(f_{\lisset})_*I = 0$ for $q>0$.
    \end{enumerate}
\end{lemma}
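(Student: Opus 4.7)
Plan:

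The proof will rest on the structural Lemma \ref{L:inj_stk}, and part (2) will be extracted from part (1) essentially by sheafification on $Y_{\lisset}$.

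For part (1), I would first invoke Lemma \ref{L:inj_stk} to write $I$ as a direct summand of either $p_*\widetilde{J}$ (in the affine diagonal case, with $p\colon\spec A\to X$ a smooth affine surjection and $J$ an injective $A$-module) or of a filtered colimit $\colim_i\gamma_{i,*}G_i$ (in the noetherian affine-pointed case, with each $\gamma_i$ from an artinian $Z_i$ and $G_i$ coherent). Cohomology on $U_{\lisset}$ commutes with summands, and with filtered colimits under the noetherian hypothesis, so it suffices to verify vanishing for a single building block $F$. The strategy is then to form the Cartesian square
\[
\xymatrix@R=2ex{
V\ar[r]\ar[d]_-{p'} & \spec A\text{ (or }Z)\ar[d]\\
U\ar[r]_-{u} & X
}
\]
and check that $V$ is an affine scheme: in the affine diagonal case because $u$ is affine and $\spec A$ is affine, so $V\to\spec A$ is affine; in the noetherian case because $V\to Z$ is smooth over an artinian base with affine closed fibre (inherited from $u$), hence a flat nilpotent thickening of an affine scheme. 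Flat base change for quasi-coherent sheaves along the smooth $u$ gives $F|_U\simeq p'_{*}(\widetilde{J}|_V)$, and since $p'$ is affine (base change of an affine morphism), the Leray spectral sequence collapses to $H^q(U_{\lisset},F)\cong H^q(V_{\lisset},\widetilde{J}|_V)$, which vanishes for $q>0$ by affine vanishing of quasi-coherent cohomology.

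For part (2), recall that $\RDERF^q(f_{\lisset})_*I$ is the sheafification on $Y_{\lisset}$ of the presheaf sending $v\colon V\to Y$ to $H^q((V\times_YX)_{\lisset},I|_{V\times_YX})$, so it is enough to exhibit a smooth covering of $Y$ on which the presheaf vanishes. My plan is to choose $v\colon V\to Y$ smooth surjective with $V$ affine---automatically affine under the affine diagonal hypothesis on $Y$, and reduced to an affine situation via the residual gerbe factorization from Lemma \ref{L:badgroup} under the affine-pointed hypothesis---so that the projection $V\times_YX\to X$ is affine, and $V\times_YX$ inherits the hypothesis of the theorem. Base-changing Lemma \ref{L:inj_stk}'s decomposition of $I$ along $V\times_YX\to X$ replaces the structural affine scheme $\spec A$ (resp.\ the artinian $Z_i$) by $V\times_Y\spec A$ (resp.\ $V\times_YZ_i$), which is again affine by the same composition argument as in (1); cohomology vanishing then follows by repeating the reasoning of (1) on a smooth affine cover $V\times_YU_0\to V\times_YX$ and descending via the resulting Čech spectral sequence.

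Main obstacle: The central technical point throughout is securing that the iterated fibre products ($V$ in (1) and $V\times_Y\spec A$ in (2)) really are affine schemes. In the affine diagonal case this is pure composition of affine morphisms, but in the noetherian affine-pointed case one must invoke the structural fact that a smooth scheme over an artinian base with affine closed fibre is itself affine, and additionally ensure that filtered colimits commute past cohomology---both of which rely crucially on the noetherian hypothesis. Once these affineness statements are in hand, flat base change, Leray collapse, and Čech descent assemble the rest.
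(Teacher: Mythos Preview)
Your plan for part (1) is essentially the paper's own argument: reduce via Lemma~\ref{L:inj_stk} to a single building block $w_*M$, form the fibre product $W_U=W\times_X U$, verify it is affine, and conclude by flat base change plus affine vanishing. Your identification of the key technical input in the noetherian case (a smooth algebraic space over an artinian base with affine closed fibre is affine) is exactly right, and the paper uses the same fact without comment.

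Part (2), however, goes astray. Two concrete problems:
\begin{itemize}
\item The appeal to Lemma~\ref{L:badgroup} is misplaced; that lemma concerns representable morphisms $B_k\Ga\to X$ and has nothing to do with showing that smooth morphisms from affines to an affine-pointed stack are well-behaved.
\item More seriously, in the affine-pointed case the projection $V\times_Y X\to X$ is \emph{not} affine in general; it only has affine fibres. (If $V$ is affine and $Y$ is affine-pointed, then for any field-valued point $\spec k\to Y$ the morphism is affine, so the fibre $V\times_Y\spec k$ is affine---but that is all you get.)
\end{itemize}

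Once you notice that ``affine fibres'' is precisely the hypothesis of part (1) in the noetherian case, the detour through base-changing the decomposition of $I$, the scheme $V\times_Y Z_i$, and a \v{C}ech spectral sequence all evaporates. The paper's proof of (2) is one line: for $V$ affine and $v\colon V\to Y$ smooth, the hypothesis on $Y$ forces $v$ to be affine (resp.\ to have affine fibres), so the base change $V\times_Y X\to X$ is smooth and affine (resp.\ has affine fibres), and part (1) applied to this morphism gives $H^q((V\times_Y X)_{\lisset},I)=0$ directly. Since $\RDERF^q(f_{\lisset})_*I$ is the sheafification of $V\mapsto H^q((V\times_Y X)_{\lisset},I)$, you are done.
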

\begin{proof}
  Let $W$ be an affine (resp.\ artinian) scheme and let $M \in \QCOH(W)$ be injective 
  (resp.\ $M\in \COH(W)$). Let $w \colon W \to X$ be a smooth and surjective morphism 
  (resp.\ a morphism). By Lemma \ref{L:inj_stk}, it is sufficient to prove the result for 
  $I=w_*M$. Since $X$ has affine diagonal (resp.\ $X$ is affine-pointed), 
  $w$ is affine. In particular, the natural map $(w_*M)[0] \to \RDERF (w_{\lisset})_*M$ is a 
  quasi-isomorphism.

  We now prove \itemref{L:bdd_below_coho:global}. Let $u_W \colon W_U \to W$ be the 
  pull back of $u$ along $w$ and let $w_U \colon W_U \to U$ be the pull back of $w$ along 
  $u$. In both cases, $u_W$ is smooth and affine and $w_U$ is affine; in particular, $W_U$ 
  is an affine scheme. Since $u$ is smooth,
  \begin{align*}
  \RDERF\Gamma(U_{\lisset},I) &\simeq \RDERF\Gamma(U_{\lisset},u^*\RDERF (w_{\lisset})_*M)\simeq \RDERF\Gamma(U_{\lisset},\RDERF((w_U)_{\lisset})_*(u_W^*M))\\
    &\simeq \RDERF\Gamma((W_U)_{\lisset},M).
  \end{align*}
  The result now follows from the affine case (e.g., 
  \cite[III.1.3.1]{EGA}). 

  For \itemref{L:bdd_below_coho:local}: let $v\colon V \to Y$ be a smooth 
  morphism, where $V$ is an affine scheme. Since $Y$ has affine diagonal 
  (resp.\ is affine-pointed), $v$ is affine (resp.\ has affine fibers). By  
  \itemref{L:bdd_below_coho:global}, $H^q((V\times_Y X)_{\lisset},I) =0$. But 
  $\RDERF^q(f_{\lisset})_*I$ is the sheafification of the presheaf $V\mapsto H^q((V\times_Y 
  X)_{\lisset},I)$; the result follows.
\end{proof}
\begin{proof}[Proof of Theorem \ref{T:bdd_below}]
  We first establish the full faithfulness. Let $F$, $G\in \DCAT^+(\QCOH(X))$; then we wish 
  to prove that the natural map
  \[
  \Hom_{\DCAT(\QCOH(X))}(F,G) \to \Hom_{\DCAT(X)}(F,G)
  \]
  is an isomorphism. A standard way-out argument shows that it is sufficient to prove that 
  the natural map
  \[
  \Ext^q_{\QCOH(X)}(N,M) \to \Ext^q_{\Orb_X}(N,M)
  \]
  is an isomorphism for every $q\in \Z$ and $M$, $N\in \QCOH(X)$. For $q<0$ both sides 
  vanish and for $q=0$ we clearly have an isomorphism. For $q>0$, since every $M$ 
  embeds in a quasi-coherent injective $I$, a standard $\delta$-functor argument shows 
  that it is sufficient to prove that if $I$ is an injective object of $\QCOH(X)$, then 
  \[
  \Ext^q_{\Orb_{X}}(N,I) = 0
  \]
  for all $q>0$ and $N\in \QCOH(X)$. To see this we note that by Lemma 
  \ref{L:inj_stk}\itemref{L:inj_stk:aff}, $I$ is a direct summand of $(p_{\QCOH})_*J$, where 
  $p\colon \spec A \to X$ is smooth and surjective and $J$ is an injective $A$-module. Thus, it 
  suffices to prove the result when $I=(p_{\QCOH})_*J$. By Lemma
  \ref{L:bdd_below_coho}\itemref{L:bdd_below_coho:local}, the natural map 
  $((p_{\QCOH})_*J)[0] \to \RDERF(p_{\lisset})_*J$ is a quasi-isomorphism. Hence, there are 
  natural isomorphisms:
  \[
  \Ext^q_{\Orb_{X}}(N,(p_{\QCOH})_*J) \cong \Ext^q_{\Orb_{X}}(N,\RDERF(p_{\lisset})_*J) \cong 
  \Ext^q_{\Orb_{\spec A}}(p^*N,J).
  \]
  We are now reduced to the affine case, which is well-known (e.g., 
  \cite[Lem.~5.4]{MR1214458}).

  For the essential surjectivity, we argue as follows: by induction and using the full 
  faithfulness, one easily sees that $\DCAT^b(\QCOH(X)) \simeq \DQCOH^b(X)$. Passing to 
  homotopy colimits, we obtain the claim.
\end{proof}
\bibliographystyle{bibstyle}
\bibliography{references,local_references}

\end{document}